\newtheorem{theorem}{Theorem}[section]
\newtheorem{lemma}[theorem]{Lemma}
\newtheorem{corollary}[theorem]{Corollary}
\theoremstyle{definition}
\newtheorem{definition}[theorem]{Definition}
\newtheorem{remark}{Remark}[section]
\numberwithin{equation}{section}
\begin{document}
\noindent {\footnotesize\tiny}\\[1.00in]

\textcolor[rgb]{0.00,0.00,1.00}{}
\title[]{On the Iterations of a  Sequence of Strongly Quasi-nonexpansive Mappings with Applications}
\maketitle
\begin{center}
	{\sf Hadi Khat\texttt{}ibzadeh\footnote{E-mail: $^{1}$hkhatibzadeh@znu.ac.ir, $^{2}$mohebbi@znu.ac.ir.} and Vahid Mohebbi$^2$}\\
	{\footnotesize{\it $^{1,2}$ Department of Mathematics, University
			of Zanjan, P. O. Box 45195-313, Zanjan, Iran.}}
\end{center}
\begin{abstract}
In this paper, we study $\Delta$- convergence of iterations for a sequence of strongly quasi-nonexpansive mappings as well as the strong convergence of the Halpern type
regularization of them in Hadamard spaces. Then, we give some their applications in iterative methods, convex and pseudo-convex minimization(proximal point algorithm), fixed point theory and equilibrium problems. The results extend several new results in the literature (for example \cite{bac1, bac-rei, Cholamjiak, fo, is-2, km-1, kr-1, kr-4, llm, qo, tzh, tx, xu}) and some of them seem new even in Hilbert spaces.
\end{abstract}

\section{\bf Introduction and Preliminaries}

Let $(X,d)$ be a metric space. A geodesic from $x$ to $y$ is a map
$\gamma$ from the closed interval $[0,d(x,y)] \subset \mathbb{R}$
to $X$ such that $\gamma(0) = x,\ \gamma(d(x,y)) = y$ and
$d(\gamma(t), \gamma(t')) = |t - t'| $ for all $t, t'\in [0,d(x,y)]$.
The space $(X, d)$ is said to be a
geodesic space if every two points of X are joined by a geodesic. The metric segment $[x,y]$ contains the images of all geodesics, which connect $x$ to $y$. $X$ is called unique geodesic iff $[x,y]$ contains only one geodesic.\\
Let $X$ be a unique geodesic metric space. For each $x, y\in X$ and for each $t\in [0, 1]$, there exists a unique point $z\in [x, y]$ such that
$d(x, z) = td(x, y)$ and $d(y, z) = (1 - t)d(x, y)$. We will use the notation $(1 - t)x \oplus ty$ for the unique point $z$ satisfying in the above statement.

In a unique geodesic metric space $X$, a set $A\subset X$ is called convex iff for each $x,y\in A$, $[x,y]\subset A$.
A unique geodesic metric space $X$ is called CAT(0) space if for all $x, y, z\in X$ and for each $t\in [0, 1]$, we have the following inequality
$$d^2((1 - t)x \oplus ty, z)\leq (1 - t)d^2(x, z) + td^2(y, z)- t(1 - t)d^2(x, y).$$
A complete CAT(0) space is called a Hadamard space.

Berg and Nikolaev in \cite{nik2, nik} have introduced the concept of quasi-linearization
along these lines (see also \cite{aka}). Let us formally denote a pair $(a, b) \in X \times  X$ by $\overset {\rightarrow}{ab}$ and call it a vector. Then quasi-linearization is defined as a map $\langle\cdot,\cdot\rangle : (X \times X)\times(X \times X)\rightarrow \mathbb{R}$ defined by
$$\langle\overset {\rightarrow}{ab},\overset {\rightarrow}{cd}\rangle
=\frac{1}{2}\{d^2(a, d) + d^2(b, c) - d^2(a, c) - d^2(b, d)\}  \   \   \    \ (a, b, c, d \in X).$$
It is easily seen that
$\langle\overset {\rightarrow}{ab},\overset {\rightarrow}{ab}\rangle=d^2(a, b)$,
$\langle\overset {\rightarrow}{ab},\overset {\rightarrow}{cd}\rangle
=
\langle\overset {\rightarrow}{cd},\overset {\rightarrow}{ab}\rangle$,
$\langle\overset {\rightarrow}{ab},\overset {\rightarrow}{cd}\rangle
= -\langle\overset {\rightarrow}{ba},\overset {\rightarrow}{cd}\rangle$
and $\langle\overset {\rightarrow}{ax},\overset {\rightarrow}{cd}\rangle
+
\langle\overset {\rightarrow}{xb},\overset {\rightarrow}{cd}\rangle
=
\langle\overset {\rightarrow}{ab},\overset {\rightarrow}{cd}\rangle$
for all $a, b, c, d, x \in X$. We say that X satisfies the Cauchy-Schwartz inequality if
$\langle\overset {\rightarrow}{ab},\overset {\rightarrow}{cd}\rangle \leq d(a, b)d(c, d)$
for all $a, b, c, d \in X$. It is known (Corollary 3 of \cite{nik}) that a geodesically connected metric space
is a CAT(0) space if and only if it satisfies the Cauchy-Schwartz inequality.

A kind of convergence introduced by Lim \cite{lim} in order to extend weak convergence in CAT(0) setting. Let $(X,d)$ be a Hadamard space, $\{x_k\}$ be a bounded sequence
in $X$ and $x\in X$. Let $r(x,\{x_k\})=\limsup d(x,x_k)$. The
asymptotic radius of $\{x_k\}$ is given by $r(\{x_k\})= \inf
\{r(x,\{x_k\})| x\in X \}$ and the asymptotic center of
$\{x_k\}$ is the set $A(\{x_k\})=\{x \in X|
r(x,\{x_k\})=r(\{x_k\})\}$. It is known that in a Hadamard space,
$A(\{x_k\})$ consists exactly one point.

\begin{definition}
	A sequence $\{x_k\}$ in a Hadamard space $(X,d)$
	$\triangle$-converges to $x\in X$
	if $A(\{x_{k_n}\})=\{x\}$, for each subsequence $\{x_{k_n}\}$ of $\{x_k\}$.\\
It is well-known that every bounded sequence in a Hadamard space has a $\Delta$-convergent subsequence (see \cite{w-b}).
	We denote $\triangle$-convergence in $X$ by
	$\overset{\triangle}{\longrightarrow}$
	and the metric convergence by $\rightarrow$.
\end{definition}

Let  $C\subseteq X$ be closed and convex. Suppose that $T : C\rightarrow C$ is a mapping and $F(T) := \{x \in C : Tx = x\}$. T is said to be nonexpansive (resp. quasi-nonexpansive)
iff $d(Tx,Ty)\leq d(x,y), \  \forall x, y \in C$ (resp. $F(T)\neq\varnothing$ and $d(Tx, q)\leq d(x,q), \  \forall(x, q) \in C \times F(T))$. We recall the definitions of firmly nonexpansive and quasi firmly nonexpansive mappings.
\begin{definition}
	A mapping $T:C\rightarrow C$ is called firmly nonexpansive iff $$\langle \overset{\rightarrow}{xy},\overrightarrow{TxTy}\rangle \geq d^2(Tx,Ty), \  \forall x, y \in C$$
	$T$ is called quasi firmly nonexpansive if $F(T)\neq\varnothing$ and $$\langle \overset{\rightarrow}{xp},\overrightarrow{Txp}\rangle \geq d^2(Tx,p), \  \forall(x, p) \in C \times F(T).$$
\end{definition}

Our definitions of firmly nonexpansive and quasi firmly nonexpansive are extensions of the definitions in Hilbert spaces but they seem different from the corresponding definitions in the literature (see for example \cite{ariz, nico}). We don't know the relation between two definitions of firmly nonexpansive mappings but for quasi-firmly nonexpansive mappings which are more important in this paper, it is easy to check that the usual definition in the literature implies our definition. Therefore our definition is more general than the old definition.

Recently some authors considered the asymptotic behavior of iterations of a (firmly) nonexpansive mapping in geodesic metric spaces specially in Hadamard spaces (see \cite{ariz, nico}). In the next section, we study the asymptotic behavior of iterations of a sequence of quasi firmly nonexpansive mappings as well as the dynamical behavior of their combination with Halperm iteration. We prove our results for more general class of mappings that are strongly quasi nonexpansive sequence.

Following \cite{kr-2}, we recall that $T : C\rightarrow C$ is strongly nonexpansive (resp. strongly quasi nonexpansive)
iff T is nonexpansive and $d(x_k,Tx_k)-d(y_k,Ty_k)\rightarrow 0$, whenever
$\{x_k\}$ and $\{y_k\}$ are sequences in $C$ such that $d(x_k,y_k)$ is bounded and $d(x_k,y_k)-d(Tx_k,Ty_k) \rightarrow 0$ (resp. T is quasi-nonexpansive and $d(x_k,Tx_k)\rightarrow 0$, whenever $\{x_k\}$
is a bounded sequence in C such that $d(x_k,q)-d(Tx_k,q) \rightarrow 0$, for some $q \in F(T)$).
We also recall strongly nonexpansive and strongly quasi-nonexpansive sequences  that play an essential role in this paper. The sequence $\{T_k\}$ of nonexpansive mappings
is said to be strongly nonexpansive sequence iff $d(x_k,T_k x_k)-d(y_k,T_ky_k) \rightarrow 0$,
whenever $\{x_k\}$ and $\{y_k\}$ are sequences in C such that $d(x_k,y_k)$ is bounded and
$d(x_k,y_k)-d(T_kx_k,T_ky_k) \rightarrow 0$ . The sequence $\{T_k\}$ of quasi-nonexpansive mappings is
said to be strongly quasi-nonexpansive sequence iff $\bigcap_kF(T_k)\neq\varnothing$ and $d(x_k,T_k x_k)\rightarrow 0$,
whenever $\{x_k\}$ is a bounded sequence in C such that $d(x_k,q)-d(T_kx_k,q) \rightarrow 0$, for some $q\in\bigcap_kF(T_k)$. It is clear that a strongly nonexpansive sequence $\{T_k\}$ with
$\bigcap_kF(T_k)\neq\varnothing$ is a strongly quasi-nonexpansive sequence.

Let $T:C\rightarrow C$ be a firmly nonexpansive mapping with $F (T)\neq\varnothing$, where $C$ is a closed convex subset of a real Hilbert space $H$. A well-known result implies that  the orbit of an arbitrary point of $H$ under $T$ is convergent weakly to a fixed point of $T$. This result recently has been improved to Hadamard spaces by Ariza-Ruiz et al. in \cite{ariz} and Nicolae in \cite{nico}. They showed that the sequence $x_k=T^kx$ is $\Delta$-convergent to a fixed point of $T$. To achieve strong convergence we need some regularized methods like Halpern regularization which was first used by Xu \cite{xu} in Hilbert spaces and in Hadamard spaces by \cite{sae}.  In this paper we consider the asymptotic behavior of iterations of a sequence of quasi firmly nonexpansive mappings or more generally strongly quasi nonexpansive mappings as well as their Halpern regularization and prove $\Delta$- convergence and strong convergence of their iterations to a common fixed point of the sequence.
In Section 3 of the paper we consider the applications of our results in iterative methods, convex and pseudo-convex minimization, fixed point theory of quasi-nonexpansive mappings and equilibrium problems of pseudo-monotone bifunctions.

\section{\bf Convergence of a Strongly Quasi-nonexpansive Sequence }

To prove the convergence of the iterations $T^kx$, where $T$ is a firmly nonexpansive mapping, demiclosedness of $T$ is essential. $T:C\rightarrow C$ is called demiclosed iff $d(x_k,Tx_k)\rightarrow0$ and $x_k\overset{\triangle}{\longrightarrow} x$ imply $x\in  F(T)$. Demiclosedness is satisfied for nonexpansive mappings but for quasi firmly nonexpansive mappings and strongly quasi nonexpansive mappings we don't have this essential property even in Hilbert spaces, therefore we must assume it. Since we intend to prove convergence for a sequence of strongly quasi nonexpansive mappings we need the definition of demiclosedness for a sequence of mappings.

A sequence $T_k :C\rightarrow C$ of strongly quasi-nonexpansive mappings with $\bigcap_kF(T_k)\neq\varnothing$ is called demiclosed iff
\begin{equation}\begin{cases}
\text{if} \ \{x_{k_j}\}\subset \{x_k\} \  \text{and} \ \{T_{k_j}\}\subset \{T_k\} \ \text{such that} \\
x_{k_j}\overset{\triangle}{\longrightarrow} p\in C \ \text{and} \ \lim d(x_{k_j},T_{k_j}x_{k_j})=0, \ \text{then} \ p\in \bigcap_kF(T_k)
\end{cases}\label{condition}\end{equation}

In this section, we obtain $\Delta$-  convergence of the sequence $\{x_k\}$ given by
\begin{equation}x_{k+1}=T_kx_k\label{iteration}\end{equation} to an element of $\bigcap_kF(T_k)\neq\varnothing$ as well as the strong convergence of the Halpern type algorithm:
\begin{equation}
x_{k+1}=\alpha_k u\oplus(1-\alpha_k)T_kx_k,\\
\label{min-halpern}\end{equation}
to the element $x^*={\rm Proj}_{\bigcap_kF(T_k)}u$, where $u, x_1\in C$ and the sequence $\{\alpha_k\}\subset(0,1)$ satisfies $\lim \alpha_k=0$ and $\sum_{k=1}^{+\infty}\alpha_k= +\infty$. The recent result extends the results of \cite{kr-2} from Hilbert spaces to Hadamard spaces.

\begin{theorem}\label{TTT}
Suppose that $T_k:C\rightarrow C$ is a sequence of strongly quasi-nonexpansive mappings and $x_0\in C$. We define $x_{k+1}=T_k \cdots T_1x_0$ such that $\{T_k\}$ satisfies \eqref{condition}. Then the sequence $\{x_k\}$, $\Delta$-converges to an element of $\bigcap_kF(T_k)$.
\end{theorem}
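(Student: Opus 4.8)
The plan is to run the standard Fejér-monotonicity programme for CAT(0) spaces, whose three ingredients are (i) monotonicity of $\{d(x_k,q)\}$ for fixed points $q$, (ii) asymptotic regularity $d(x_k,T_kx_k)\to 0$, and (iii) uniqueness of the $\Delta$-cluster point via the asymptotic centre. Write $F:=\bigcap_kF(T_k)$, which is nonempty by the very definition of a strongly quasi-nonexpansive sequence, and note that the iteration unwinds to the recursion $x_{k+1}=T_kx_k$. Fixing any $q\in F$, quasi-nonexpansiveness of $T_k$ gives $d(x_{k+1},q)=d(T_kx_k,q)\le d(x_k,q)$, so $\{d(x_k,q)\}$ is nonincreasing, hence convergent; in particular $\{x_k\}$ is bounded and $\lim_k d(x_k,q)$ exists for every $q\in F$.

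For asymptotic regularity I would feed this back into the defining property of a strongly quasi-nonexpansive sequence. Since $d(x_k,q)-d(T_kx_k,q)=d(x_k,q)-d(x_{k+1},q)\to 0$ and $\{x_k\}$ is bounded, that property yields directly $d(x_k,T_kx_k)\to 0$. This is the one step that genuinely uses the \emph{strong} (rather than merely quasi-) hypothesis, and it replaces the firm-nonexpansiveness computations used in \cite{ariz,nico}. Next I would use demiclosedness \eqref{condition} to locate the $\Delta$-cluster points: if $\{x_{k_j}\}$ is any $\Delta$-convergent subsequence, say $x_{k_j}\overset{\triangle}{\longrightarrow}p$ (such subsequences exist by boundedness, with $p\in C$ since $C$ is closed and convex), then $d(x_{k_j},T_{k_j}x_{k_j})\to 0$ along it, so \eqref{condition} forces $p\in F$. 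Thus every $\Delta$-cluster point of $\{x_k\}$ lies in $F$.

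The main obstacle is the final uniqueness step: showing $\{x_k\}$ has exactly one $\Delta$-cluster point. Here I would invoke the uniqueness of asymptotic centres in Hadamard spaces through the familiar sublemma: if $A(\{x_k\})=\{x\}$, if $\{u_k\}$ is a subsequence with $A(\{u_k\})=\{u\}$, and if $\lim_k d(x_k,u)$ exists, then $x=u$; indeed, were $x\neq u$, strict minimality of the two centres would give $\lim_k d(x_k,u)=\limsup_k d(u_k,u)<\limsup_k d(u_k,x)\le\limsup_k d(x_k,x)<\limsup_k d(x_k,u)=\lim_k d(x_k,u)$, a contradiction.

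With this sublemma the uniqueness follows in three applications. Applied to a $\Delta$-convergent subsequence (whose centre is in $F$ by the previous paragraph, so that $\lim_k d(x_k,\cdot)$ exists there) it shows that the centre $x$ of the whole sequence itself lies in $F$. Applied to an arbitrary subsequence $\{u_k\}$ together with a $\Delta$-convergent sub-subsequence (whose centre lies in $F$ by \eqref{condition}) it shows that the centre of $\{u_k\}$ also lies in $F$. A final application, now legitimate because $\lim_k d(x_k,\cdot)$ exists at that centre, forces the centre of $\{u_k\}$ to equal $x$. Hence every subsequence has asymptotic centre $\{x\}$, which is precisely $\Delta$-convergence of $\{x_k\}$ to the point $x\in F=\bigcap_kF(T_k)$, as required.
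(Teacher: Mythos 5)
Your proof is correct and follows essentially the same route as the paper: Fej\'er monotonicity of $\{d(x_k,q)\}$ for $q\in\bigcap_kF(T_k)$, asymptotic regularity $d(x_k,T_kx_k)\to 0$ via the strongly quasi-nonexpansive hypothesis, and demiclosedness \eqref{condition} to place every $\Delta$-cluster point in $\bigcap_kF(T_k)$. The only difference is that where the paper simply invokes the Opial-type lemma (Lemma 2.1 of \cite{kr-3}) to conclude, you prove that lemma inline with a correct asymptotic-centre uniqueness argument, which makes your write-up self-contained but not a different method.
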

\begin{proof}
Take $x^*\in \bigcap_kF(T_k)$, then we have
$d(x_{k+1}, x^*)=d(T_kx_k,x^*)\leq d(x_k,x^*)$.
Therefore $\lim d(x_k, x^*)$ exists for all $x^*\in \bigcap_kF(T_k)$, also $\{x_k\}$ is bounded.
Hence, there are $\{x_{k_n}\}$ of $\{x_k\}$ and $p\in C$ such that  $x_{k_n}\overset{\triangle}{\longrightarrow} p\in C$. On the other hand, since $\{T_k\}$ is a sequence of strongly quasi nonexpansive mappings and $\lim d(x_k, x^*)$ exists for all $x^*\in \bigcap_kF(T_k)$, hence  $\lim d(x_{k_n},T_{k_n}x_{k_n})=0$. Now, \eqref{condition} shows that $p\in \bigcap_kF(T_k)$. In the sequel, Opial lemma (see Lemma 2.1 in \cite{kr-3}) follows the result.
\end{proof}

\begin{remark}
Suppose that $\{S_0, S_1, \dots, S_{r-1}\}$ is  a finite family of quasi nonexpansive mappings  which are demiclosed and  define the sequence $\{T_k\}$ by $T_k=S_{[k]}$ where $[k]=k \ (mod \ r) $. Therefore Theorem \ref{TTT} extends Theorem 4.1 of \cite{ariza-lop} in Hadamard space setting.
\end{remark}

\begin{lemma}\cite{ss1} \label{hss1}
	Let $\lbrace s_{k}\rbrace$ be a sequence of nonnegative real numbers, $\lbrace a_{k}\rbrace$ be a sequence of real numbers in $(0, 1)$ with $\sum _{k=1}^{\infty} a_{k} = +\infty$ and $\lbrace t_{k}\rbrace$ be a sequence of real numbers. Suppose that
	$$s_{k+1} \leq (1-a_{k}) s_{k} + a_{k} t_{k},\ \ \ \ \ \forall k\in \mathbb{N}$$
	If $\limsup t_{k_{n}} \leq 0$ for every subsequence $\{s_{k_{n}}\}$ of $\{s_k\}$ satisfying $\liminf (s_{k_{n}+1}-s_{k_{n}})\geq 0$, then $\lim s_{k} = 0$.
\end{lemma}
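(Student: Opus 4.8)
The plan is to show $\limsup_k s_k \le 0$; since $s_k \ge 0$ forces $\liminf_k s_k \ge 0$, this gives $\lim_k s_k = 0$. The whole difficulty is that the hypothesis controls $\{t_k\}$ only along those subsequences of $\{s_k\}$ that are asymptotically nondecreasing (i.e. satisfy $\liminf_n(s_{k_n+1}-s_{k_n})\ge 0$), so the proof must manufacture such subsequences which nonetheless pin down $\limsup_k s_k$. The one algebraic identity I would use throughout is the rearrangement of the recursion, namely $a_k(s_k-t_k)\le s_k-s_{k+1}$ for every $k$. I would then split into two cases according to whether $\{s_k\}$ is eventually monotone.

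Case 1: $\{s_k\}$ is nonincreasing for all large $k$. Being bounded below by $0$, it converges to some $L\ge 0$, and the full tail is itself an admissible subsequence since $\liminf_k(s_{k+1}-s_k)=\lim_k(s_{k+1}-s_k)=0$. The hypothesis then gives $\limsup_k t_k\le 0$, hence $\liminf_k(s_k-t_k)\ge L$. If $L>0$ then $s_k-t_k\ge L/2>0$ for large $k$, and summing $a_k(s_k-t_k)\le s_k-s_{k+1}$ yields $\tfrac{L}{2}\sum_k a_k\le \sum_k(s_k-s_{k+1})<\infty$ by telescoping, contradicting $\sum_k a_k=+\infty$. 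Therefore $L=0$.

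Case 2: $\{s_k\}$ is not eventually monotone, so there are infinitely many $k$ with $s_k<s_{k+1}$. Here I would invoke the Maing\'e index construction: for all large $n$ set $\tau(n)=\max\{k\le n: s_k<s_{k+1}\}$. One checks that $\tau(n)$ is well-defined, nondecreasing, $\tau(n)\to\infty$, and for all such $n$ both $s_{\tau(n)}\le s_{\tau(n)+1}$ and $s_n\le s_{\tau(n)+1}$. Since $\tau$ attains infinitely many values and $s_{\tau(n)+1}-s_{\tau(n)}\ge 0$, the associated subsequence is admissible, so the hypothesis gives $\limsup_n t_{\tau(n)}\le 0$. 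Plugging $s_{\tau(n)}-s_{\tau(n)+1}\le 0$ into $a_{\tau(n)}(s_{\tau(n)}-t_{\tau(n)})\le s_{\tau(n)}-s_{\tau(n)+1}\le 0$ and dividing by $a_{\tau(n)}>0$ gives $s_{\tau(n)}\le t_{\tau(n)}$, whence $\limsup_n s_{\tau(n)}\le 0$; using the recursion once more together with $0<a_{\tau(n)}<1$ (so $a_{\tau(n)}t_{\tau(n)}\le t_{\tau(n)}^{+}\to 0$) gives $\limsup_n s_{\tau(n)+1}\le 0$ as well. Finally $0\le s_n\le s_{\tau(n)+1}$ forces $\limsup_k s_k\le 0$.

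The main obstacle is Case 2, specifically justifying and exploiting the Maing\'e construction: one must verify the key propagation inequality $s_n\le s_{\tau(n)+1}$, which is what transfers the bound obtained at the chosen ``increasing'' indices back to the entire sequence, and one must check that $\{s_{\tau(n)}\}$ really qualifies as a subsequence in the sense demanded by the hypothesis, passing if necessary to the strictly increasing values taken by $\tau$. By contrast, the algebraic manipulation and Case~1 are routine once the rearrangement $a_k(s_k-t_k)\le s_k-s_{k+1}$ and the divergence of $\sum_k a_k$ are in hand.
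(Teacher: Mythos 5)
The paper gives no proof of this lemma (it is quoted verbatim from \cite{ss1}), and your argument is correct and essentially reproduces the proof in that cited source: the dichotomy between an eventually nonincreasing tail (telescoping $a_k(s_k-t_k)\le s_k-s_{k+1}$ against $\sum_k a_k=+\infty$) and infinitely many indices with $s_k<s_{k+1}$, the latter handled by Maing\'e's index $\tau(n)=\max\{k\le n:\ s_k<s_{k+1}\}$ together with the propagation inequality $s_n\le s_{\tau(n)+1}$. The only slip is cosmetic: your Case 2 heading ``not eventually monotone'' should read ``not eventually nonincreasing,'' but the condition you actually use (infinitely many $k$ with $s_k<s_{k+1}$) is the exact complement of Case 1, and your care in passing to the strictly increasing values of $\tau$ so that the hypothesis applies to a genuine subsequence is exactly the right point to check.
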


\begin{theorem}\label{theo-hal}
Suppose that $T_k :C\rightarrow C$ is a sequence of strongly quasi-nonexpansive mappings such that \eqref{condition} is satisfied, then the sequence $\{x_k\}$ generated by (\ref{min-halpern}) converges strongly to ${\rm Proj}_{\bigcap_kF(T_k)}u$.
\end{theorem}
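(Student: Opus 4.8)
The plan is to put $K:=\bigcap_k F(T_k)$ and to observe first that $K$ is nonempty, closed and convex (the fixed‑point set of a quasi‑nonexpansive map on a Hadamard space is closed and convex, and $K$ is their intersection), so that $x^*={\rm Proj}_K u$ is well defined and satisfies the variational characterization of the metric projection, namely $\langle\overrightarrow{ux^*},\overrightarrow{x^*z}\rangle\le 0$ for every $z\in K$, equivalently $d^2(u,z)\ge d^2(u,x^*)+d^2(x^*,z)$; this last inequality I would derive by testing $d(u,x^*)\le d(u,(1-t)x^*\oplus t z)$ against the defining CAT(0) inequality and letting $t\to 0$. I would then record that $\{x_k\}$ is bounded: applying convexity of the metric to \eqref{min-halpern} gives $d(x_{k+1},x^*)\le\alpha_k d(u,x^*)+(1-\alpha_k)d(T_kx_k,x^*)\le\alpha_k d(u,x^*)+(1-\alpha_k)d(x_k,x^*)$, so by induction $d(x_k,x^*)\le\max\{d(u,x^*),d(x_1,x^*)\}$.

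The engine is Lemma \ref{hss1} applied to $s_k:=d^2(x_k,x^*)$ and $a_k:=\alpha_k$. Using the defining CAT(0) inequality on \eqref{min-halpern} together with $d(T_kx_k,x^*)\le d(x_k,x^*)$, I would obtain
\[ d^2(x_{k+1},x^*)\le(1-\alpha_k)d^2(x_k,x^*)+\alpha_k\,t_k,\qquad t_k:=d^2(u,x^*)-(1-\alpha_k)d^2(u,T_kx_k). \]
It then remains to verify the hypothesis of Lemma \ref{hss1}: for every subsequence $\{s_{k_n}\}$ with $\liminf_n(s_{k_n+1}-s_{k_n})\ge 0$ one has $\limsup_n t_{k_n}\le 0$.

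So I fix such a subsequence and first establish asymptotic regularity along it. Dropping the negative term in the CAT(0) inequality yields $d^2(x_{k+1},x^*)\le\alpha_k d^2(u,x^*)+(1-\alpha_k)d^2(T_kx_k,x^*)$, whence
\[ (1-\alpha_k)\big(d^2(x_k,x^*)-d^2(T_kx_k,x^*)\big)\le\big(d^2(x_k,x^*)-d^2(x_{k+1},x^*)\big)+\alpha_k\big(d^2(u,x^*)-d^2(x_k,x^*)\big). \]
Taking $\limsup_n$, the first term on the right has $\limsup$ equal to $-\liminf_n(s_{k_n+1}-s_{k_n})\le 0$ and the second tends to $0$ since $\alpha_{k_n}\to 0$ and $\{x_k\}$ is bounded; as the left bracket is nonnegative and $1-\alpha_{k_n}\to 1$, I conclude $d(x_{k_n},x^*)-d(T_{k_n}x_{k_n},x^*)\to 0$. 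Since $\{T_k\}$ is a strongly quasi‑nonexpansive sequence (a property inherited by $\{T_{k_n}\}$ upon extending the data by $x^*$ at the remaining indices), this forces $d(x_{k_n},T_{k_n}x_{k_n})\to 0$.

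Finally I would pass to the limit in $t_{k_n}$. Because $\alpha_{k_n}\to 0$ and $d(x_{k_n},T_{k_n}x_{k_n})\to 0$, one has $\limsup_n t_{k_n}=d^2(u,x^*)-\liminf_n d^2(u,x_{k_n})$, so it suffices to show $\liminf_n d^2(u,x_{k_n})\ge d^2(u,x^*)$. I choose a further subsequence realizing this $\liminf$ and, by boundedness, $\Delta$‑converging to some $p\in C$; from $d(x_{k_n},T_{k_n}x_{k_n})\to 0$ and the demiclosedness condition \eqref{condition} it follows that $p\in K$. The decisive ingredient is the asymptotic‑center inequality: if a bounded sequence $\{w_j\}$ $\Delta$‑converges to $p$, then $\limsup_j d^2(w_j,v)\ge\limsup_j d^2(w_j,p)+d^2(p,v)$ for every $v$, which I would prove by using the minimality of the asymptotic center $p$ against the geodesic point $(1-t)p\oplus t v$ and letting $t\to 0$. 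Applied to the chosen subsequence with $v=u$ it gives $\liminf_n d^2(u,x_{k_n})\ge d^2(p,u)$, and the projection inequality gives $d^2(p,u)\ge d^2(u,x^*)$; hence $\limsup_n t_{k_n}\le 0$, and Lemma \ref{hss1} yields $s_k\to 0$, i.e. $x_k\to x^*={\rm Proj}_K u$ strongly. The delicate point, and the reason for working with the defining CAT(0) inequality rather than a Hilbert‑type expansion, is exactly this limit passage: the CAT(0) inequality produces the term $-d^2(u,T_kx_k)$, which needs only the lower bound supplied by the asymptotic‑center inequality, whereas a Hilbert‑type estimate would leave a difference of squared distances that $\Delta$‑convergence alone cannot control.
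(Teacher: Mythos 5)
Your proposal is correct and follows essentially the same route as the paper: the same application of Lemma \ref{hss1} with $s_k=d^2(x_k,x^*)$, $a_k=\alpha_k$ and $t_k=d^2(u,x^*)-(1-\alpha_k)d^2(u,T_kx_k)$, asymptotic regularity along the distinguished subsequence obtained from strong quasi-nonexpansiveness, a $\Delta$-convergent further subsequence fed into condition \eqref{condition}, and the minimality of $x^*={\rm Proj}_{\bigcap_kF(T_k)}u$ to close the estimate. The only differences are that you make explicit two points the paper leaves tacit --- the extension-by-$x^*$ device needed to apply the strongly quasi-nonexpansive sequence definition to a subsequence, and the asymptotic-center proof that $d^2(u,\cdot)$ is $\Delta$-lower semicontinuous --- which are refinements of the same argument rather than a different one.
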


\begin{proof}
Since $ \bigcap_kF(T_k)$ is closed and convex, therefore we assume that  $ x^*={\rm Proj}_{\bigcap_kF(T_k)}u$. By (\ref{min-halpern}), we have:\\
$d(x^*,x_{k+1})\leq \alpha_k d(x^*,u)+(1-\alpha_k)d(x^*,T_kx_k)
\leq \alpha_k d(x^*,u)+(1-\alpha_k)d(x^*,x_k)\leq {\rm max} \{d(x^*,u),d(x^*,x_k)\}
 \leq \cdots \leq {\rm max} \{d(x^*,u),d(x^*,x_1)\}$.\\
Therefore $\{x_k\}$ is bounded. Now, by (\ref{min-halpern}), we have:
$$d^2(x_{k+1},x^*)\leq (1-\alpha_k)d^2(T_kx_k,x^*)+\alpha_kd^2(u,x^*)-\alpha_k(1-\alpha_k)d^2(u,T_kx_k).$$
Since $T_k$ is quasi-nonexpansive, we have: $d^2(x^*,T_kx_k)\leq d^2(x^*,x_k)$, therefore we have:
\begin{equation}
d^2(x_{k+1},x^*)\leq (1-\alpha_k)d^2(x_k,x^*)+\alpha_kd^2(u,x^*)-\alpha_k(1-\alpha_k)d^2(u,T_kx_k).
\label{eq5}\end{equation}
In the sequel, we show $d(x_{k+1},x^*)\rightarrow 0$. By Lemma \ref{hss1}, it suffices to show that $\limsup (d^2(u,x^*)-(1-\alpha_{k_{n}})d^2(u,T_{k_{n}}x_{k_{n}}))\leq 0$  for every subsequence $\{d^2(x_{k_{n}}, x^*)\}$ of $\{d^2(x_k,x^*)\}$ satisfying  $ \liminf (d^2(x_{k_{n}+1}, x^*) - d^2(x_{k_{n}}, x^*)) \geq 0$.\\
	For this, suppose that $\{d^2(x_{k_{n}}, x^*)\}$ is a subsequence of $\{d^2(x_k,x^*)\}$ such that $\liminf (d^2(x_{k_{n}+1}, x^*) - d^2(x_{k_{n}}, x^*)) \geq 0$. Then\\
$0\leq\liminf(d^2(x^*,x_{{k_n}+1})-d^2(x^*,x_{k_n}))\leq \liminf(\alpha_{k_n}d^2(x^*,u)+(1-\alpha_{k_n})d^2(x^*,T_{k_n}x_{k_n}) \\ -d^2(x^*,x_{k_n}))
=\liminf(\alpha_{k_n}(d^2(x^*,u)-d^2(x^*,T_{k_n}x_{k_n}))+d^2(x^*,T_{k_n}x_{k_n}) -d^2(x^*,x_{k_n}))\\
\leq\limsup\alpha_{k_n}(d^2(x^*,u)-d^2(x^*,T_{k_n}x_{k_n}))+\liminf (d^2(x^*,T_{k_n}x_{k_n}) -d^2(x^*,x_{k_n}))\\
= \liminf (d^2(x^*,T_{k_n}x_{k_n}) -d^2(x^*,x_{k_n}))\leq\limsup (d^2(x^*,T_{k_n}x_{k_n}) -d^2(x^*,x_{k_n}))\leq0$.\\
Therefore, we conclude that $\lim (d^2(x^*,T_{k_n}x_{k_n})-d^2(x^*,x_{k_n}))=0$, hence by the definition, we get $\lim d^2(x_{k_n},T_{k_n}x_{k_n})=0$.\\
On the other hand, there are a subsequence $\{x_{k_{n_i}}\}$ of $\{x_{k_n}\}$ and $p\in C$ such that
$x_{k_{n_i}}\overset{\triangle}{\longrightarrow}p$ and
$$\limsup (d^2(u,x^*)-(1-\alpha_{k_n})d^2(u,T_{k_n}x_{k_n}))=\lim (d^2(u,x^*)-(1-\alpha_{k_{n_i}})d^2(u,T_{k_{n_i}}x_{k_{n_i}}))$$
Since $x_{k_{n_i}}\overset{\triangle}{\longrightarrow}p$ and $\lim d(x_{k_{n_i}},T_{k_{n_i}}x_{k_{n_i}})=0$, by \eqref{condition} we have  $p\in \bigcap_kF(T_k)$. On the other hand, $x^*={\rm Proj}_{\bigcap_kF(T_k)}u$, hence we have:
$$\limsup (d^2(u,x^*)-(1-\alpha_{k_n})d^2(u,T_{k_n}x_{k_n}))\leq d^2(u,x^*)-d^2(u,p)\leq0.$$
Therefore Lemma \ref{hss1} shows that
$d(x_{k+1},x^*)\rightarrow 0,$
i.e. $x_k\rightarrow x^*={\rm Proj}_{\bigcap_kF(T_k)}u.$
\end{proof}

\section{\bf Applications}
In this section, we present some examples of strongly quasi nonexpansive sequences and give some applications of the main results in the previous section in iterative methods, optimizatin, fixed point theory and equilibrium problems.
\subsection{\bf Application to Iterative Methods}

 Consider the following iteration which is called Ishikawa iteration.
\begin{equation}
x_{k+1}=(1-\alpha_k)x_k\oplus\alpha_kT((1-\beta_k)x_k\oplus\beta_kTx_k),
\label{Ishikawa-1}\end{equation}
where $T$ is a quasi-nonexpansive mapping and $\alpha_k, \beta_k\in(0,1)$  are two sequences with suitable assumptions. Define
\begin{equation}
 T_k:=(1-\alpha_k)I\oplus\alpha_kT((1-\beta_k)I\oplus\beta_kT),
\label{ish-seq}\end{equation}
where $I$ is the identity mapping. We will prove that $\{T_k\}$ is a strongly quasi-nonexpansive sequence and it satisfies \eqref{condition}. Then we apply the main results to conclude $\Delta$- convergence of Ishikawa iteration and the strong convergence of the Halpern-Ishikawa iteration.
\begin{lemma}\label{lem-ishikawa-1}
Let $T:C\rightarrow C$ be a quasi-nonexpansive mapping. If $\alpha_k\in (0,1)$ be a sequence such that $\limsup \alpha_k <1$, then the sequence $\{T_k\}$ defined by \eqref{ish-seq} is strongly quasi-nonexpansive.
\end{lemma}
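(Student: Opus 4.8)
The plan is to verify the two defining requirements separately: first that $\{T_k\}$ is a quasi-nonexpansive sequence whose common fixed point set contains $F(T)$, and then the asymptotic regularity condition $d(x_k,T_kx_k)\to 0$. For the first, I would observe that any $q\in F(T)$ satisfies $(1-\beta_k)q\oplus\beta_k Tq=q$, so the inner map fixes $q$; hence $T_kq=(1-\alpha_k)q\oplus\alpha_k Tq=q$, giving $\varnothing\neq F(T)\subseteq\bigcap_k F(T_k)$. Writing $y=(1-\beta_k)x\oplus\beta_k Tx$, the quasi-nonexpansive estimate $d(T_kx,q)\le d(x,q)$ then follows from the convexity of the metric in a CAT(0) space together with the quasi-nonexpansivity of $T$ applied twice: $d(y,q)\le(1-\beta_k)d(x,q)+\beta_k d(Tx,q)\le d(x,q)$, then $d(Ty,q)\le d(y,q)\le d(x,q)$, and a final convex combination for $T_kx$.

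For the strong quasi-nonexpansive property, fix $q\in\bigcap_k F(T_k)$ and a bounded sequence $\{x_k\}$ with $d(x_k,q)-d(T_kx_k,q)\to 0$. Writing $z_k:=T\big((1-\beta_k)x_k\oplus\beta_k Tx_k\big)$, so that $T_kx_k=(1-\alpha_k)x_k\oplus\alpha_k z_k$, the idea is to exploit the extra quadratic term in the CAT(0) inequality. Applying it gives
$$d^2(T_kx_k,q)\le(1-\alpha_k)d^2(x_k,q)+\alpha_k d^2(z_k,q)-\alpha_k(1-\alpha_k)d^2(x_k,z_k),$$
and since $d(z_k,q)\le d(x_k,q)$ (by the same two-step argument as above) the first two terms collapse to $d^2(x_k,q)$, leaving
$$\alpha_k(1-\alpha_k)d^2(x_k,z_k)\le d^2(x_k,q)-d^2(T_kx_k,q).$$
The right-hand side tends to $0$: factoring it as $\big(d(x_k,q)-d(T_kx_k,q)\big)\big(d(x_k,q)+d(T_kx_k,q)\big)$, the first factor vanishes by hypothesis while the second is bounded because $\{x_k\}$ is bounded and $d(T_kx_k,q)\le d(x_k,q)$.

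The decisive step is to convert this control over a difference of squared distances into control over $d(x_k,T_kx_k)$ itself. Here I would use the defining property of the geodesic point, namely $d(x_k,T_kx_k)=\alpha_k\,d(x_k,z_k)$, to rewrite the left-hand side above as $\tfrac{1-\alpha_k}{\alpha_k}d^2(x_k,T_kx_k)$, so that $d^2(x_k,T_kx_k)\le\tfrac{\alpha_k}{1-\alpha_k}\big(d^2(x_k,q)-d^2(T_kx_k,q)\big)$. This is exactly where the hypothesis $\limsup\alpha_k<1$ enters: it forces $1-\alpha_k$ to stay bounded away from $0$, hence $\tfrac{\alpha_k}{1-\alpha_k}$ is bounded, and the right-hand side goes to $0$. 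I expect this conversion to be the main obstacle, since naively one controls only $d^2(x_k,z_k)$ through the vanishing weight $\alpha_k(1-\alpha_k)$; it is the geodesic identity $d(x_k,T_kx_k)=\alpha_k d(x_k,z_k)$ together with the bound on $\alpha_k$ that rescues asymptotic regularity. Note that no restriction on $\{\beta_k\}$ is needed throughout, consistent with the statement.
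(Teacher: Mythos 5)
Your proposal is correct and follows essentially the same route as the paper: the same CAT(0) inequality with $z_k=T\big((1-\beta_k)x_k\oplus\beta_k Tx_k\big)$, the same geodesic identity $d(x_k,T_kx_k)=\alpha_k d(x_k,z_k)$ yielding $\frac{1-\alpha_k}{\alpha_k}d^2(x_k,T_kx_k)\leq d^2(x_k,q)-d^2(T_kx_k,q)$, and the same use of $\limsup\alpha_k<1$ to bound $\frac{\alpha_k}{1-\alpha_k}$. You merely spell out steps the paper leaves implicit (the verification that $F(T)\subseteq\bigcap_k F(T_k)$, the two-step estimate $d(z_k,q)\leq d(x_k,q)$, and the difference-of-squares factorization), which is fine.
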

\begin{proof}
Take $\{x_k\}$ in $C$ and $p\in F(T)$. Now, by definition of $T_k$, we have\\
$d^2(T_kx_k, p)\leq(1-\alpha_k)d^2(x_k,p)+\alpha_kd^2(T(1-\beta_k)x_k\oplus\beta_kTx_k), p)-\alpha_k(1-\alpha_k)d^2(x_k,T((1-\beta_k)x_k\oplus\beta_kTx_k))
\leq (1-\alpha_k)d^2(x_k,p)+\alpha_kd^2(x_k,p)
-\frac{1-\alpha_k}{\alpha_k} d^2(x_k,T_kx_k)$.\\
Therefore
$$\frac{1-\alpha_k}{\alpha_k} d^2(x_k,T_kx_k)\leq d^2(x_k,p)-d^2(T_kx_k,p),$$
which shows $T_k$ is strongly quasi-nonexpansive.
\end{proof}
Now, we show the sequence $\{T_k\}$ satisfies \eqref{condition}.
\begin{lemma}\label{lem-ishikawa-2}
Let $T:C\rightarrow C$ be a demiclosed and quasi-nonexpansive mapping. If $\alpha_k, \beta_k\in(0,1)$ are two sequences such that $0<\liminf \alpha_k\leq \limsup \alpha_k <1$ and $\beta_k\rightarrow 0$, then the sequence $\{T_k\}$ defined by \eqref{ish-seq} satisfies \eqref{condition}.
\end{lemma}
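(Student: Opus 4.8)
The plan is to verify condition \eqref{condition} directly, ultimately reducing the whole matter to the demiclosedness of the single map $T$. Throughout, write $y_k := (1-\beta_k)x_k\oplus\beta_k Tx_k$ for the inner Ishikawa point, so that by \eqref{ish-seq} one has $T_kx_k=(1-\alpha_k)x_k\oplus\alpha_k Ty_k$. The first observation I would record is that $F(T)\subseteq F(T_k)$ for every $k$: if $Tp=p$ then the inner point equals $p$, whence $T_kp=(1-\alpha_k)p\oplus\alpha_k p=p$; consequently $F(T)\subseteq\bigcap_kF(T_k)$, and it suffices to prove $p\in F(T)$. Now take any subsequences $\{x_{k_j}\}$, $\{T_{k_j}\}$ with $x_{k_j}\overset{\triangle}{\longrightarrow}p$ and $\lim_j d(x_{k_j},T_{k_j}x_{k_j})=0$. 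Using the defining property of the point $(1-t)x\oplus ty$, I would first record the geodesic identity $d(x_{k_j},T_{k_j}x_{k_j})=\alpha_{k_j}\,d(x_{k_j},Ty_{k_j})$. Since $\liminf\alpha_k>0$, the factor $\alpha_{k_j}$ is bounded away from $0$, so the hypothesis $d(x_{k_j},T_{k_j}x_{k_j})\to 0$ forces $d(x_{k_j},Ty_{k_j})\to 0$.

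Next I would show that the inner point is asymptotically indistinguishable from $x_{k_j}$, i.e.\ $d(x_{k_j},y_{k_j})\to 0$. Because $\{x_{k_j}\}$ is $\triangle$-convergent it is bounded, and since $T$ is quasi-nonexpansive with a fixed point $q$, we have $d(Tx_{k_j},q)\leq d(x_{k_j},q)$, so $\{Tx_{k_j}\}$ is bounded and therefore $d(x_{k_j},Tx_{k_j})$ is bounded. From the geodesic identity $d(x_{k_j},y_{k_j})=\beta_{k_j}\,d(x_{k_j},Tx_{k_j})$ and $\beta_k\to 0$, the claim $d(x_{k_j},y_{k_j})\to 0$ follows at once.

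Combining the two estimates via the triangle inequality gives $d(y_{k_j},Ty_{k_j})\leq d(y_{k_j},x_{k_j})+d(x_{k_j},Ty_{k_j})\to 0$. It remains to see that $y_{k_j}\overset{\triangle}{\longrightarrow}p$ as well: since $d(x_{k_j},y_{k_j})\to 0$, for every $x\in X$ one has $|d(x,y_{k_j})-d(x,x_{k_j})|\to 0$, so the functions $r(x,\cdot)$ agree in the limit on every subsequence, the asymptotic centers coincide, and hence each subsequence of $\{y_{k_j}\}$ has asymptotic center $\{p\}$. Applying the demiclosedness of $T$ to $y_{k_j}\overset{\triangle}{\longrightarrow}p$ together with $d(y_{k_j},Ty_{k_j})\to 0$ yields $p\in F(T)\subseteq\bigcap_kF(T_k)$, which is exactly \eqref{condition}.

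The main obstacle, and the reason the argument is routed through $y_{k_j}$ rather than $x_{k_j}$, is that $T$ is only assumed \emph{quasi}-nonexpansive: one cannot estimate $d(Ty_{k_j},Tx_{k_j})\leq d(y_{k_j},x_{k_j})$, so there is no direct way to convert $d(x_{k_j},Ty_{k_j})\to 0$ into $d(x_{k_j},Tx_{k_j})\to 0$ and invoke demiclosedness at $x_{k_j}$. The device that resolves this is to apply demiclosedness at the inner point $y_{k_j}$, where $d(y_{k_j},Ty_{k_j})\to 0$ holds for free, using the fact that $\beta_k\to 0$ collapses $y_{k_j}$ onto $x_{k_j}$ so that the $\triangle$-limit transfers; the only genuinely delicate point left is verifying that transfer of $\triangle$-convergence along sequences at asymptotically vanishing distance, which I would handle through the asymptotic-center characterization as above.
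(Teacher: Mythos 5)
Your proof is correct and follows essentially the same route as the paper's: reduce to demiclosedness of $T$ at the inner point $y_k=(1-\beta_k)x_k\oplus\beta_k Tx_k$, using $\liminf\alpha_k>0$ to get $d(x_k,Ty_k)\to 0$, quasi-nonexpansiveness plus $\beta_k\to 0$ to get $d(x_k,y_k)\to 0$, and the triangle inequality to conclude $d(y_k,Ty_k)\to 0$. If anything, you are more careful than the paper, which asserts $y_k\overset{\triangle}{\longrightarrow}p$ without justification, whereas you verify the transfer of $\triangle$-convergence along asymptotically vanishing distances via the asymptotic-center characterization, and you make the inclusion $F(T)\subseteq\bigcap_k F(T_k)$ explicit.
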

\begin{proof}
Let $\{x_k\}$ be an arbitrary sequence such that $x_k\overset{\triangle}{\longrightarrow} p$ and $d(x_k, T_k x_k)\rightarrow 0$. We have to prove $p\in \bigcap_k F(T_k)$. The definition of  $T_k$ together with $d(x_k, T_kx_k)\rightarrow 0$ imply that $\alpha_kd(x_k, T((1-\beta_k)x_k\oplus\beta_kTx_k))\rightarrow 0$, hence we have $d(x_k,T((1-\beta_k)x_k\oplus\beta_kTx_k))\rightarrow 0$. Now, set $y_k=(1-\beta_k)x_k\oplus\beta_kTx_k$. We show that $d(y_k,Ty_k)\rightarrow 0$. On the other hand, since $T$ quasi nonexpansive therefore $F(T)\neq\varnothing$ and hence $d(Tx_k,p)\leq d(x_k,p)$ for all $p\in F(T)$, therefore $\{Tx_k\}$ is bounded. Now, since $x_k\overset{\triangle}{\longrightarrow} p$ and $\beta_k\rightarrow 0$, $y_k\overset{\triangle}{\longrightarrow} p$. Note that $d(x_k,Ty_k)\rightarrow 0$, hence we have
$$d(y_k,Ty_k)\leq d(y_k,x_k)+d(x_k,Ty_k)=\beta_k d(x_k,Tx_k)+d(x_k,Ty_k)\rightarrow 0.$$ Now, $y_k\overset{\triangle}{\longrightarrow} p$ and demiclosedness of $T$ imply $p\in F(T)$, i.e. $p\in \bigcap_k F(T_k)$.
\end{proof}
\begin{remark} With assumptions of Lemma 3.2, if $\beta_k\rightarrow0$, then $F(T)=\cap_{k}F(T_k)$.
The following corollary implies that the generated sequence by \eqref{Ishikawa-1} $\Delta$-converges to an element of $F(T)$.
\end{remark}
\begin{theorem}\label{theo-ishikawa-1}
Let $T:C\rightarrow C$ be a demiclosed and quasi-nonexpansive mapping. If $\alpha_k, \beta_k\in(0,1)$ are two sequences such that $\limsup \alpha_k <1$ and $\beta_k\rightarrow 0$, then the sequence  $x_{k}$ generated by \eqref{Ishikawa-1}  $\Delta$-converges to an element of $F(T)$.
\end{theorem}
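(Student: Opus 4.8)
The plan is to recognize the Ishikawa recursion \eqref{Ishikawa-1} as a special case of the abstract scheme treated in Theorem \ref{TTT}. With $T_k$ defined by \eqref{ish-seq}, the defining property of the point $(1-t)x\oplus ty$ shows at once that \eqref{Ishikawa-1} is nothing but $x_{k+1}=T_kx_k$, equivalently $x_{k+1}=T_k\cdots T_1x_0$; so the entire statement will follow from Theorem \ref{TTT} once I check that $\{T_k\}$ is a strongly quasi-nonexpansive sequence satisfying the demiclosedness condition \eqref{condition}, and once I identify $\bigcap_kF(T_k)$ with $F(T)$. Each of these pieces has been prepared in advance, so the proof is essentially an assembly.

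First I would apply Lemma \ref{lem-ishikawa-1}, whose sole hypothesis on the parameters is $\limsup\alpha_k<1$, to conclude that $\{T_k\}$ is strongly quasi-nonexpansive. Next I would apply Lemma \ref{lem-ishikawa-2}, which uses the demiclosedness of $T$ together with $\beta_k\rightarrow 0$, to conclude that $\{T_k\}$ satisfies \eqref{condition}. With both hypotheses of Theorem \ref{TTT} in force, that theorem delivers a point $p\in\bigcap_kF(T_k)$ with $x_k\overset{\triangle}{\longrightarrow}p$. Finally, the Remark following Lemma \ref{lem-ishikawa-2} records that $\bigcap_kF(T_k)=F(T)$ whenever $\beta_k\rightarrow 0$, so $p\in F(T)$ and the proof is finished.

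The real work, and the step I expect to be the main obstacle, is the verification of \eqref{condition} encapsulated in Lemma \ref{lem-ishikawa-2}, because it is there that one must convert the information $d(x_k,T_kx_k)\rightarrow 0$ into a statement about $T$ on which demiclosedness can act. Since $T_kx_k=(1-\alpha_k)x_k\oplus\alpha_kTy_k$ with $y_k=(1-\beta_k)x_k\oplus\beta_kTx_k$, one has the exact identity $d(x_k,T_kx_k)=\alpha_k\,d(x_k,Ty_k)$, so recovering $d(x_k,Ty_k)\rightarrow 0$ forces a division by $\alpha_k$; this is precisely the place where the lower control on $\alpha_k$ supplied by the hypotheses of Lemma \ref{lem-ishikawa-2} is indispensable, and it is the most delicate point of the whole argument. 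Once $d(x_k,Ty_k)\rightarrow 0$ is in hand the remainder is routine: from $d(y_k,x_k)=\beta_k\,d(x_k,Tx_k)$ and the boundedness of $\{Tx_k\}$ (a consequence of quasi-nonexpansiveness) one gets $d(y_k,Ty_k)\leq d(y_k,x_k)+d(x_k,Ty_k)\rightarrow 0$ using $\beta_k\rightarrow 0$, while $x_k\overset{\triangle}{\longrightarrow}p$ together with $\beta_k\rightarrow 0$ gives $y_k\overset{\triangle}{\longrightarrow}p$; demiclosedness of $T$ then yields $p\in F(T)$, which is exactly what \eqref{condition} demands.
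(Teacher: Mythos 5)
Your proof is exactly the paper's: it assembles Lemma \ref{lem-ishikawa-1}, Lemma \ref{lem-ishikawa-2}, Remark 3.1 and Theorem \ref{TTT} in precisely the same way, with the identification $\bigcap_k F(T_k)=F(T)$ coming from $\beta_k\rightarrow 0$. You also correctly flag the one delicate point --- that passing from $\alpha_k\,d(x_k,Ty_k)\rightarrow 0$ to $d(x_k,Ty_k)\rightarrow 0$ requires $\liminf\alpha_k>0$, a hypothesis stated in Lemma \ref{lem-ishikawa-2} but omitted from the theorem's own statement, an imprecision your write-up inherits from, and shares with, the paper itself.
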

\begin{proof}
It is a consequence of Lemma \ref{lem-ishikawa-1}, Lemma \ref{lem-ishikawa-2}, Remark 3.1 and Theorem \ref{TTT}.
\end{proof}
Now, we prove the strong convergence of the generated sequence by \eqref{Ishikawa-1} to an element of $F(T)$.
\begin{theorem}\label{theo-ishikawa-2}
Let $T:C\rightarrow C$ be a demiclosed  and quasi-nonexpansive mapping. If $\alpha_k, \beta_k\in(0,1)$ are two sequences such that $\limsup \alpha_k <1$ and $\beta_k\rightarrow 0$, then the sequence $\{x_k\}$ generated by
$$x_{k+1}=\gamma_k u\oplus(1-\gamma_k)T_k x_k,$$
where $\{T_k\}$ is defined by \eqref{ish-seq}, $u, x_1\in C$ and the sequence $\gamma_k\in(0,1)$ satisfies $\lim \gamma_k=0$ and $\sum_{k=1}^{+\infty}\gamma_k= +\infty$ converges strongly to  ${\rm Proj}_{F(T)}u$.
\end{theorem}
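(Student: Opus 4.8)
The plan is to obtain this theorem as a direct application of Theorem~\ref{theo-hal}, exactly as Theorem~\ref{theo-ishikawa-1} obtained the corresponding $\Delta$-convergence statement from Theorem~\ref{TTT}. The crucial observation is that the iteration
$$x_{k+1}=\gamma_k u\oplus(1-\gamma_k)T_k x_k$$
is precisely the Halpern-type scheme \eqref{min-halpern}, with the control sequence $\gamma_k$ playing the role that $\alpha_k$ plays there; moreover the hypotheses $\lim\gamma_k=0$ and $\sum_k\gamma_k=+\infty$ are exactly the ones required in Theorem~\ref{theo-hal}. Hence the whole task reduces to verifying that the sequence $\{T_k\}$ defined by \eqref{ish-seq} satisfies the hypotheses of that theorem, namely that it is strongly quasi-nonexpansive and that it satisfies the demiclosedness condition \eqref{condition}.

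First I would invoke Lemma~\ref{lem-ishikawa-1}: since $\limsup\alpha_k<1$, the sequence $\{T_k\}$ is strongly quasi-nonexpansive, and in particular $\bigcap_k F(T_k)\neq\varnothing$. Next I would invoke Lemma~\ref{lem-ishikawa-2}, using that $T$ is demiclosed and $\beta_k\to 0$, to conclude that $\{T_k\}$ satisfies \eqref{condition}. With both properties established, Theorem~\ref{theo-hal} applies verbatim and yields that $x_k\to{\rm Proj}_{\bigcap_k F(T_k)}u$ strongly. To match the stated limit I would then appeal to Remark~3.1, which identifies $\bigcap_k F(T_k)=F(T)$ when $\beta_k\to 0$, thereby rewriting ${\rm Proj}_{\bigcap_k F(T_k)}u$ as ${\rm Proj}_{F(T)}u$ and finishing the argument.

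The only genuine friction I anticipate is a mismatch in the hypotheses on $\{\alpha_k\}$: Lemma~\ref{lem-ishikawa-2} is stated under the two-sided condition $0<\liminf\alpha_k\leq\limsup\alpha_k<1$, whereas the present theorem assumes only $\limsup\alpha_k<1$. To apply Lemma~\ref{lem-ishikawa-2} cleanly one should either strengthen the hypothesis so that $\liminf\alpha_k>0$ as well, or re-examine that lemma to check whether the lower bound is truly needed. The lower bound enters exactly at the passage from $\alpha_k\,d(x_k,T((1-\beta_k)x_k\oplus\beta_k Tx_k))\to 0$ to $d(x_k,T((1-\beta_k)x_k\oplus\beta_k Tx_k))\to 0$, which does require $\alpha_k$ to stay away from $0$, so the bound appears to be essential for that step. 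Apart from this bookkeeping point, the theorem is a one-line consequence of Lemma~\ref{lem-ishikawa-1}, Lemma~\ref{lem-ishikawa-2}, Remark~3.1 and Theorem~\ref{theo-hal}.
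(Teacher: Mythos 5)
Your proposal is correct and coincides with the paper's own proof, which consists precisely of citing Lemma~\ref{lem-ishikawa-1}, Lemma~\ref{lem-ishikawa-2}, Remark~3.1 and Theorem~\ref{theo-hal} in the way you describe. Your side remark about the hypothesis mismatch is well taken: the theorem as stated assumes only $\limsup\alpha_k<1$ while Lemma~\ref{lem-ishikawa-2} needs $\liminf\alpha_k>0$ for the step dividing out $\alpha_k$, a gap present in the paper itself (its proof applies the lemma without comment), so strengthening the hypothesis as you suggest is the right fix.
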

\begin{proof}
$\{T_k\}$ is a strongly quasi-nonexpansive sequence by Lemma \ref{lem-ishikawa-1}. Also Lemma \ref{lem-ishikawa-2} shows that the sequence $T_k$
satisfies \eqref{condition}. Now, Theorem \ref{theo-hal} and Remark 3.1 imply that $\{x_k\}$ converges strongly to  ${\rm Proj}_{F(T)}u$.
\end{proof}
 If we take $\beta_k\equiv0$ in \eqref{Ishikawa-1}, then we gain the Mann iteration, i.e.
\begin{equation}
x_{k+1}=(1-\alpha_k)x_k\oplus\alpha_kTx_k,
\label{mann-1}\end{equation}
\begin{corollary}
  Let $T:C\rightarrow C$ be a demiclosed and quasi-nonexpansive mapping. If $\alpha_k\in(0,1)$ is a sequence such that $\limsup \alpha_k <1$,  then the sequence $\{x_k\}$ generated by \eqref{Ishikawa-1}  $\Delta$-converges to an element of $F(T)$.
\end{corollary}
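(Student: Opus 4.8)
The plan is to recognize the scheme of this corollary as the specialization of the Ishikawa iteration \eqref{Ishikawa-1} obtained by setting $\beta_k\equiv 0$, which produces the Mann iteration \eqref{mann-1}, so that the defining mapping \eqref{ish-seq} collapses to $T_k=(1-\alpha_k)I\oplus\alpha_k T$. I would then run exactly the three-step argument used for Theorem \ref{theo-ishikawa-1}, checking that each step survives the passage to the boundary value $\beta_k=0$.

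First, I would verify that $\{T_k\}$ is a strongly quasi-nonexpansive sequence. This is Lemma \ref{lem-ishikawa-1} with $\beta_k=0$, whose hypothesis is precisely $\limsup\alpha_k<1$. Since the inner geodesic average $(1-\beta_k)x_k\oplus\beta_k Tx_k$ reduces to $x_k$, the CAT(0) estimate in that proof simplifies directly to $\frac{1-\alpha_k}{\alpha_k}\,d^2(x_k,T_kx_k)\le d^2(x_k,p)-d^2(T_kx_k,p)$ for $p\in F(T)$. Because $\limsup\alpha_k<1$ keeps $\frac{1-\alpha_k}{\alpha_k}$ bounded away from $0$, the vanishing of $d^2(x_k,p)-d^2(T_kx_k,p)$ forces $d(x_k,T_kx_k)\to 0$, which is the required property.

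Second, I would verify that $\{T_k\}$ satisfies \eqref{condition}. Here the auxiliary point $y_k$ appearing in Lemma \ref{lem-ishikawa-2} is simply $x_k$, so the argument shortens: from $d(x_{k_j},T_{k_j}x_{k_j})\to 0$ one reads off $\alpha_{k_j}d(x_{k_j},Tx_{k_j})\to 0$, hence $d(x_{k_j},Tx_{k_j})\to 0$, and demiclosedness of $T$ applied to the $\Delta$-limit $p$ of $\{x_{k_j}\}$ gives $p\in F(T)=\bigcap_kF(T_k)$, the last identity being Remark 3.1. Feeding this into Theorem \ref{TTT} then yields $\Delta$-convergence of $\{x_k\}$ to a point of $\bigcap_kF(T_k)=F(T)$.

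The step I expect to be delicate is the implication $d(x_{k_j},T_{k_j}x_{k_j})\to 0\Rightarrow d(x_{k_j},Tx_{k_j})\to 0$ in the second step: it rests on the identity $d(x_k,T_kx_k)=\alpha_k d(x_k,Tx_k)$ and therefore needs $\alpha_k$ to stay bounded away from $0$ along the relevant indices. With only $\limsup\alpha_k<1$ in force this can fail when $\alpha_k\to 0$, so one must either carry along the tacit assumption $\liminf\alpha_k>0$ inherited from Lemma \ref{lem-ishikawa-2}, or restrict attention to the indices where $\alpha_k$ is bounded below. Securing \eqref{condition} under the stated hypotheses is thus the real content of the proof, the remaining steps being routine specializations of Lemma \ref{lem-ishikawa-1}, Remark 3.1 and Theorem \ref{TTT}.
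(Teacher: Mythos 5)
Your proposal follows the paper's route exactly: the paper proves this corollary in one line, as the specialization of Theorem \ref{theo-ishikawa-1} to $\beta_k\equiv 0$ (the corollary's citation of \eqref{Ishikawa-1} is evidently a slip for the Mann scheme \eqref{mann-1}, which is how you rightly read it), and Theorem \ref{theo-ishikawa-1} is in turn proved by precisely the chain you unfold: Lemma \ref{lem-ishikawa-1} for the strongly quasi-nonexpansive property, Lemma \ref{lem-ishikawa-2} for condition \eqref{condition}, Remark 3.1 for $F(T)=\bigcap_k F(T_k)$, and Theorem \ref{TTT}. Your verification that $\limsup\alpha_k<1$ alone suffices for the first step is correct, since $\frac{1-\alpha_k}{\alpha_k}\geq 1-\alpha_k$ stays bounded away from $0$.

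The delicate point you flag is genuine, and it is a defect in the paper's statements rather than in your argument. Lemma \ref{lem-ishikawa-2} requires $0<\liminf\alpha_k$, yet Theorem \ref{theo-ishikawa-1} and this corollary assume only $\limsup\alpha_k<1$; under that hypothesis alone condition \eqref{condition} can fail for $T_k=(1-\alpha_k)I\oplus\alpha_k T$, because $d(x_k,T_kx_k)=\alpha_k\,d(x_k,Tx_k)$ gives no control on $d(x_k,Tx_k)$ when $\alpha_k\to 0$. Moreover, the conclusion itself is false without the extra hypothesis, so no alternative argument (including your suggestion of passing to indices where $\alpha_k$ is bounded below) can rescue the statement as written: take $X=\mathbb{R}$, $Tx=-x$ (nonexpansive, demiclosed, quasi-nonexpansive, $F(T)=\{0\}$) and $\alpha_k=2^{-k-2}$; then $x_{k+1}=(1-2^{-k-1})x_k$, so $x_k\to x_0\prod_{j\geq 1}\bigl(1-2^{-j-1}\bigr)\neq 0$ whenever $x_0\neq 0$, and the limit is not a fixed point. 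So your proof is correct exactly when the hypothesis $\liminf\alpha_k>0$ is restored, and you were right not to trust the stated hypotheses.
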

\begin{proof}
It is a consequence of Theorem \ref{theo-ishikawa-1}.
\end{proof}
\begin{corollary}
  Let $T:C\rightarrow C$ be a  demiclosed  and quasi-nonexpansive mapping. If $\alpha_k\in(0,1)$ is a sequence such that $\limsup \alpha_k <1$, then the sequence $\{x_k\}$ generated by
$$x_{k+1}=\gamma_k u\oplus(1-\gamma_k)((1-\alpha_k)x_k\oplus\alpha_kTx_k),$$
where $u, x_1\in C$ and the sequence $\gamma_k\in(0,1)$ satisfies $\lim \gamma_k=0$ and $\sum_{k=1}^{+\infty}\gamma_k= +\infty$, converges strongly to  ${\rm Proj}_{F(T)}u$.
\end{corollary}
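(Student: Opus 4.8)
The plan is to read this scheme as the Mann specialization ($\beta_k\equiv 0$) of the Halpern--Ishikawa iteration treated in Theorem \ref{theo-ishikawa-2}, and to deduce the strong convergence directly from that theorem (equivalently, from Theorem \ref{theo-hal}). Accordingly, the whole argument reduces to checking that the operators produced by the present recursion are covered by the hypotheses already set up in Section 3.

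First I would carry out the reduction at the level of the operators. Setting $\beta_k=0$ in \eqref{ish-seq} gives $(1-\beta_k)x_k\oplus\beta_k Tx_k=x_k$, so the inner Ishikawa step collapses and
$$T_k=(1-\alpha_k)I\oplus\alpha_k T.$$
With this $T_k$ the displayed recursion becomes exactly $x_{k+1}=\gamma_k u\oplus(1-\gamma_k)T_k x_k$, which is the Halpern-type algorithm \eqref{min-halpern} with the driving sequence $\alpha_k$ there replaced by $\gamma_k$. Since $\{\gamma_k\}\subset(0,1)$ satisfies $\lim\gamma_k=0$ and $\sum_k\gamma_k=+\infty$, this driving sequence meets the hypotheses of Theorem \ref{theo-hal}.

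It then remains to verify that this particular $\{T_k\}$ is admissible for Theorem \ref{theo-hal}, i.e. that it is strongly quasi-nonexpansive and satisfies \eqref{condition}. The first property is immediate from Lemma \ref{lem-ishikawa-1}: its proof uses only the CAT(0) inequality applied to $T_k x_k=(1-\alpha_k)x_k\oplus\alpha_k T x_k$ together with quasi-nonexpansiveness of $T$, and makes no use of $\beta_k$, so it applies verbatim when $\beta_k=0$ under the sole assumption $\limsup\alpha_k<1$.

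The main point to check, and the only place where the degenerate value $\beta_k=0$ is not literally covered by the statement of Lemma \ref{lem-ishikawa-2} (which asks $\beta_k\in(0,1)$), is condition \eqref{condition}. Here I would rerun the demiclosedness argument of Lemma \ref{lem-ishikawa-2}, which in fact simplifies: if $x_{k_j}\overset{\triangle}{\longrightarrow}p$ and $d(x_{k_j},T_{k_j}x_{k_j})\to0$, then from $d(x_{k_j},T_{k_j}x_{k_j})=\alpha_{k_j}d(x_{k_j},Tx_{k_j})$ one obtains $d(x_{k_j},Tx_{k_j})\to0$ without ever introducing the auxiliary point $y_k=x_k$, and demiclosedness of $T$ with $x_{k_j}\overset{\triangle}{\longrightarrow}p$ then gives $p\in F(T)=\bigcap_k F(T_k)$ (Remark 3.1). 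With both properties in hand, Theorem \ref{theo-hal} yields $x_k\to{\rm Proj}_{\bigcap_k F(T_k)}u={\rm Proj}_{F(T)}u$. The subtlety to watch is precisely the passage from $\alpha_{k_j}d(x_{k_j},Tx_{k_j})\to0$ to $d(x_{k_j},Tx_{k_j})\to0$: this is where a lower control on $\alpha_k$ is used, mirroring the $\liminf\alpha_k>0$ hypothesis of Lemma \ref{lem-ishikawa-2}, so I would make sure the bound available from $\limsup\alpha_k<1$ (or a quietly needed $\liminf\alpha_k>0$) genuinely suffices in this step.
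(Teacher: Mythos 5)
Your reduction is exactly the paper's: its entire proof of this corollary is the single sentence that it is a consequence of Theorem \ref{theo-ishikawa-2}, i.e.\ precisely the $\beta_k\equiv 0$ specialization you describe, and your explicit rerun of Lemmas \ref{lem-ishikawa-1} and \ref{lem-ishikawa-2} for $T_k=(1-\alpha_k)I\oplus\alpha_k T$ supplies detail the paper omits --- including the correct observation that $\beta_k=0$ falls outside the literal hypothesis $\beta_k\in(0,1)$ of those lemmas, a point the paper silently ignores and which your rerun repairs.

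The caveat in your final sentence, however, is a genuine gap, and it cannot be closed under the stated hypotheses: $\limsup\alpha_k<1$ gives no lower control on $\alpha_k$, and the implication $\alpha_{k_j}d(x_{k_j},Tx_{k_j})\to 0\Rightarrow d(x_{k_j},Tx_{k_j})\to 0$ fails without $\liminf\alpha_k>0$. Indeed, if $\alpha_{k_j}\to 0$ along some subsequence, condition \eqref{condition} itself fails: take $x_{k_j}\equiv x$ for any $x\notin F(T)$; then $d(x_{k_j},T_{k_j}x_{k_j})=\alpha_{k_j}\,d(x,Tx)\to 0$ while $x_{k_j}\overset{\triangle}{\longrightarrow}x\notin F(T)=\bigcap_k F(T_k)$. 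Worse, the conclusion of the corollary is then actually false: in a Hilbert space with $T$ the metric projection onto a closed convex set $K$ and $\alpha_k=2^{-k}$, one has $x_{k+1}-u=(1-\gamma_k)(x_k-u)+(1-\gamma_k)\alpha_k(Tx_k-x_k)$, hence $\|x_{k+1}-u\|\le(1-\gamma_k)\|x_k-u\|+\alpha_k M$ with $M:=\sup_k\|Tx_k-x_k\|<\infty$, and since $\sum\gamma_k=+\infty$ and $\sum\alpha_k<+\infty$ this forces $x_k\to u$, which differs from ${\rm Proj}_K u$ whenever $u\notin K$. So the hypothesis $0<\liminf\alpha_k\le\limsup\alpha_k<1$ of Lemma \ref{lem-ishikawa-2} must be added; this omission sits in the paper itself (Theorems \ref{theo-ishikawa-1} and \ref{theo-ishikawa-2} likewise state only $\limsup\alpha_k<1$ while invoking Lemma \ref{lem-ishikawa-2}). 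Your argument is correct once $\liminf\alpha_k>0$ is assumed, and it is precisely your careful rerun that exposes the defect the paper's one-line proof glosses over.
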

\begin{proof}
A consequence of Theorem \ref{theo-ishikawa-2}.
\end{proof}

\subsection{\bf Applications to Proximal Point Algorithms}
This section contains two subsection. First we apply our main results to proximal point algorithm to approximate a minimizer of a convex or pseudo-convex function and in the second subsection we consider a Lipschitz quasi-nonexpansive mapping to approximate a fixed point of it by the proximal method. In the best of our knowledge some of the results in this section are new even in Hilbert spaces.
\subsubsection{\bf Convex and Pseudo-convex Minimization}

In this subsection, we show some applications of our main results of Theorems \ref{TTT} and \ref{theo-hal} to convex and pseudo-convex minimization.\\
A function $f:X\rightarrow]-\infty,+\infty]$ is called\\ (i) convex
iff
\begin{center}
 $f(t x\oplus(1-t)y)\leq t f(x)+(1-t)f(y),\ \ \forall x,y\in X\ \text{and} \ \forall \  0\leq t\leq1$
\end{center}
(ii) quasi convex iff
\begin{center}
$f(t x\oplus(1-t)y)\leq\max\{ f(x),f(y)\},\ \ \forall x,y\in X\ \text{and} \ \forall \ 0\leq t\leq1$
\end{center}
equivalently, for each $r\in \mathbb{R}$, the sub-level set $L^f_r:=\{x\in X:\ \ f(x)\leq r\}$ is a convex subset of $X$.\\
(iii) $\alpha$-weakly convex for some $\alpha>0$ iff
\begin{center}
$f(t x\oplus(1-t)y)\leq t f(x)+(1-t)f(y)+\alpha t(1-t)d^2(x,y),\ \ \forall x,y\in~X\ \text{and} \ \forall \ 0\leq t\leq1$
\end{center}
(iv)  pseudo-convex  iff\\
$f(y)>f(x)$ implies that there exist $\beta(x,y)>0$  and $0<\delta(x,y)\leq1$  such that $f(y)-f(tx\oplus(1-t)y)\geq t\beta(x,y)$,  $\forall t\in(0,\delta(x,y))$.\\
\begin{definition}
	Let $f:X\rightarrow]-\infty,+\infty]$. The domain of $f$ is defined by $D(f):=\{x\in X:\ f(x)<+\infty\}$. $f$ is proper iff $D(f)\neq\varnothing$.
\end{definition}

\begin{definition}
	A function $f:X\rightarrow]-\infty,+\infty]$ is called ($\Delta$-)lower semicontinuous (shortly, lsc) at $x\in D(f)$ iff $$\liminf_{n\rightarrow \infty}f(y_n)\geq f(x)$$
	for each sequence $y_n\rightarrow x$ ($y_n\overset{\triangle}{\longrightarrow}x$) as $n\rightarrow+\infty$. $f$ is called ($\Delta$-)lower semicontinuous iff it is ($\Delta$-)lower semicontinuous in each point of its domain. It is easy to see that every lower semicontinuous and quasi-convex function is $\Delta$-lower semicontinuous.
\end{definition}

Let $f:X\rightarrow]-\infty,+\infty]$ be a convex, proper and lower semicontinuous (shortly, lsc) function where $X$ is a Hadamard space. The resolvent of $f$ of order $\lambda>0$ is defined at each point $x\in X$ as follows:
$$J_{\lambda}^fx:={\rm Argmin}_{y\in X}\{f(y)+\frac{1}{2\lambda}d^2(y,x)\}$$

Existence and uniqueness of $J_{\lambda}^fx$ for each $x\in X$ and $\lambda>0$ was proved by Jost (see Lemma 2 in \cite{J2}). A similar argument shows the existence and uniqueness of the resolvent for $\alpha$-weakly convex function $f$ when $\lambda<\frac{1}{2\alpha}$. The behavior of iterations the resolvent on an arbitrary point of a Hadamard space (named the proximal point algorithm) was proved by Bacak \cite{bac1}, which extends the corresponding result proved by Martinet \cite{ma} in Hilbert spaces (see also Rockafellar \cite{ro}). In this section we conclude $\Delta$-  convergence of the proximal point algorithm as a consequence of Theorem \ref{TTT}. Also we prove the strong convergence of Halpern type proximal point algorithm as a consequence of Theorem \ref{theo-hal}. The last result extends a result by Cholamjiak \cite{Cholamjiak}. First we prove the sequence $J_{\lambda_k}^f$  of mappings satisfies the conditions of Theorems \ref{TTT} and \ref{theo-hal}.

\begin{lemma}\label{lem-qfn}
  Let $f:X\rightarrow]-\infty,+\infty]$ be a quasi-convex, $\alpha$-weakly convex, proper and lsc function. If ${\rm Argmin} f\neq\varnothing$, then $J_{\lambda}^f$ is a quasi firmly nonexpansive mapping for each $\lambda <\frac{1}{2\alpha}$.
\end{lemma}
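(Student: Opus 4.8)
The plan is to verify the two requirements in the definition of a quasi firmly nonexpansive mapping: that the fixed point set of $J_\lambda^f$ is nonempty, and that the quasilinearization inequality $\langle \overrightarrow{xp}, \overrightarrow{(J_\lambda^f x)\,p}\rangle \ge d^2(J_\lambda^f x, p)$ holds for every $x \in X$ and every $p$ in that set. For the first point I would observe that $F(J_\lambda^f) = {\rm Argmin}\, f$. The inclusion ${\rm Argmin}\, f \subseteq F(J_\lambda^f)$ is immediate: if $p$ minimizes $f$, then for every $y \in X$ one has $f(p) = f(p) + \frac{1}{2\lambda}d^2(p,p) \le f(y) + \frac{1}{2\lambda}d^2(y,p)$, so $p$ is the unique minimizer defining $J_\lambda^f p$, i.e. $J_\lambda^f p = p$; since ${\rm Argmin}\, f \neq \varnothing$ by hypothesis, $F(J_\lambda^f) \neq \varnothing$. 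Here the role of $\alpha$-weak convexity together with $\lambda < \frac{1}{2\alpha}$ is only to guarantee, via the Jost-type argument already invoked in the text, that $J_\lambda^f$ is well defined and single valued; it is the quasi-convexity that will produce the firmly nonexpansive estimate.

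For the main inequality, fix $x \in X$, write $u := J_\lambda^f x$, and let $p \in {\rm Argmin}\, f$. The idea is a geodesic-variation argument against the defining minimality of $u$. Consider $y_t := (1-t)u \oplus t p$ for $t \in (0,1)$. Since $u$ minimizes $g(y) := f(y) + \frac{1}{2\lambda}d^2(y,x)$, we have $g(u) \le g(y_t)$, and I would bound the two terms of $g(y_t)$ separately: quasi-convexity gives $f(y_t) \le \max\{f(u), f(p)\} = f(u)$, the equality holding because $p$ is a global minimizer of $f$; and the CAT(0) inequality gives $d^2(y_t, x) \le (1-t)d^2(u,x) + t\,d^2(p,x) - t(1-t)d^2(u,p)$.

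Substituting these into $g(u) \le g(y_t)$, the $f(u)$ terms cancel and, after multiplying by $2\lambda$, rearranging, and cancelling the common factor $t > 0$, one is left with $d^2(u,x) \le d^2(p,x) - (1-t)d^2(u,p)$ for every $t \in (0,1)$. Letting $t \to 0^+$ yields $d^2(x,u) + d^2(u,p) \le d^2(x,p)$. It then remains only to rewrite this via quasilinearization: since $\langle \overrightarrow{xp}, \overrightarrow{up}\rangle = \frac{1}{2}\{d^2(x,p) + d^2(u,p) - d^2(x,u)\}$, the estimate $d^2(x,p) - d^2(x,u) \ge d^2(u,p)$ gives exactly $\langle \overrightarrow{xp}, \overrightarrow{up}\rangle \ge d^2(u,p) = d^2(J_\lambda^f x, p)$, the desired property.

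The step I expect to be the most delicate is the bound $f(y_t) \le f(u)$: it is here that quasi-convexity, rather than the weaker $\alpha$-weak convexity used to define the resolvent, is essential, since the $\alpha$-weak convexity estimate would only produce $d^2(x,u) + (1 - 2\lambda\alpha)d^2(u,p) \le d^2(x,p)$, falling short of the firmly nonexpansive inequality by the factor $2\lambda\alpha$. The other point needing care is the identification $F(J_\lambda^f) = {\rm Argmin}\, f$, so that the inequality, verified for $p \in {\rm Argmin}\, f$, indeed covers all $p \in F(J_\lambda^f)$; this coincidence is standard for resolvents and follows from the optimality condition characterizing $J_\lambda^f$.
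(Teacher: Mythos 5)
Your main computation coincides with the paper's proof: the authors take $\tilde{x}\in{\rm Argmin}\,f$, test the minimality of $u:=J_{\lambda}^{f}x$ against the geodesic point $y=t\tilde{x}\oplus(1-t)u$, bound $f(y)\le f(u)$ by quasi-convexity (since $\tilde{x}$ is a global minimizer), apply the CAT(0) inequality to $d^{2}(y,x)$, cancel, divide by $t$ and let $t\to0^{+}$ to get $d^{2}(u,x)+d^{2}(u,\tilde{x})\le d^{2}(x,\tilde{x})$, and then rewrite this via quasilinearization as $\langle\overrightarrow{x\tilde{x}},\overrightarrow{u\tilde{x}}\rangle\ge d^{2}(u,\tilde{x})$. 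Up to swapping $t$ and $1-t$ in the parametrization, and a slightly different (equally valid) route through the quasilinearization identity at the end, this is exactly your argument; your remark that $\alpha$-weak convexity and $\lambda<\frac{1}{2\alpha}$ serve only to make $J_{\lambda}^{f}$ well defined also matches the paper, and your side computation showing weak convexity alone would lose a factor $2\lambda\alpha$ is correct.

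The one point you should fix is the claim that $F(J_{\lambda}^{f})={\rm Argmin}\,f$ ``is standard and follows from the optimality condition.'' At this level of generality that equality is \emph{false}: only ${\rm Argmin}\,f\subseteq F(J_{\lambda}^{f})$ is automatic, the reverse inclusion is proved in the paper (Lemma \ref{arg-fix}) only under the additional hypothesis of pseudo-convexity, and Remark 3.3 of the paper exhibits $f(x)=3x^{4}-16x^{3}+24x^{2}$ on $\mathbb{R}$ --- quasi-convex and $\alpha$-weakly convex --- for which the inclusion is strict (here $f'(x)=12x(x-2)^{2}$, so for small $\lambda$ the point $x=2\notin{\rm Argmin}\,f$ is a fixed point of $J_{\lambda}^{f}$). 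Consequently your argument establishes the quasi firmly nonexpansive inequality only for $p\in{\rm Argmin}\,f$, not for every $p\in F(J_{\lambda}^{f})$ as the paper's definition literally requires. To be fair, the paper's own proof has the same scope --- it too tests only against $\tilde{x}\in{\rm Argmin}\,f$ and leaves the extension to all fixed points unaddressed --- and in the later applications (convex or pseudo-convex $f$) the equality $F(J_{\lambda}^{f})={\rm Argmin}\,f$ does hold, so nothing downstream breaks; but you should not justify the reduction by an identity that the authors themselves disprove two remarks later.
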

\begin{proof}
  Taking $\tilde{x}\in {\rm Argmin} f$, $y=t\tilde{x}\oplus(1-t)J_{\lambda}^fx$ and using quasi-convexity of $f$, we get
	 $$f(J_{\lambda}^fx)+\frac{1}{2\lambda} d^2(J_{\lambda}^fx,x)\leq f(J_{\lambda}^fx)+\frac{1}{2\lambda}\{td^2(\tilde{x},x)+(1-t)d^2(J_{\lambda}^fx,x)-t(1-t)d^2(J_{\lambda}^fx,\tilde{x})\}$$
	 By letting $t\rightarrow0^+$, we receive to
	 $$d^2(J_{\lambda}^fx,x)-d^2(x,\tilde{x})+d^2(J_{\lambda}^fx,\tilde{x})\leq0.$$
	 Therefore $\langle\overrightarrow{J_{\lambda}^fx\tilde{x}},\overrightarrow{J_{\lambda}^fxx}\rangle\leq0$ which implies that $d^2(J_{\lambda}^fx, \tilde{x})\leq\langle\overrightarrow{J_{\lambda}^fx\tilde{x}},\overrightarrow{x\tilde{x}}\rangle$. Thus $J_{\lambda}^f$ is quasi firmly nonexpansive mapping.
\end{proof}
\begin{lemma}\label{arg-fix}
Let $f:X\rightarrow]-\infty,+\infty]$ be a quasi-convex, $\alpha$-weakly convex, proper and lsc function. If $\lambda\leq\frac{1}{2\alpha}$, then ${\rm Argmin} f \subseteq F(J_{\lambda}^f)$, moreover, if $f$ is pseudo-convex, then ${\rm Argmin} f = F(J_{\lambda}^f)$.
\end{lemma}
\begin{proof}
It is clear that ${\rm Argmin} f \subseteq F(J_{\lambda}^f)$. Now, we assume that $f$ is pseudo convex and $x\in F(J_{\lambda}^f)$, but $x\not\in {\rm Argmin} f$. Therefore there is $z\in X$ such that $f(x)>f(z)$, hence there are $\beta(x,z)>0$ and $0<\delta(x,z)\leq1$ such that
$$f(tz\oplus(1-t)x)+t\beta(x,z)<f(x),\ \ \ \ \ \ \forall t\in (0,\delta(x,z))$$
On the other hand, since $x\in F(J_{\lambda}^f)$ we have
$$f(x)\leq f(tz\oplus(1-t)x)+\frac{1}{2\lambda}d^2(tz\oplus(1-t)x,x)$$
Therefore we obtain
$$t\beta(x,z)<\frac{1}{2\lambda}d^2(tz\oplus(1-t)x,x)=\frac{t^2}{2\lambda}d^2(x,z)$$
hence $\beta(x,z)<\frac{t}{2\lambda}d^2(x,z)$, thus when $t\rightarrow 0$, we gain contradiction.
\end{proof}

\begin{remark}
In Lemma 3.2 if we define $f:\mathbb{R}\rightarrow]-\infty,+\infty]$, by $f(x)=3x^4-16x^3+24x^2$, then ${\rm Argmin }f\subset F(J_{\lambda}^f)$.
\end{remark}

\begin{remark}
In the previous lemma, if $\mu<\lambda$, then $F(J_{\lambda}^f)\subseteq F(J_{\mu}^f)$. By definition of resolvent
$$f(J_{\mu}^f x)+\frac{1}{2\mu}d^2(J_{\mu}^f x,x)\leq f(J_{\lambda}^f x)+\frac{1}{2\mu}d^2(J_{\lambda}^f x,x)$$
and
$$f(J_{\lambda}^f x)+\frac{1}{2\lambda}d^2(J_{\lambda}^f x,x)\leq f(J_{\mu}^f x)+\frac{1}{2\lambda}d^2(J_{\mu}^f x,x)$$
By summing the above two inequalities, we conclude that
$$(\frac{1}{2\mu}-\frac{1}{2\lambda})d^2(J_{\mu}^f x,x)\leq (\frac{1}{2\mu}-\frac{1}{2\lambda})d^2(J_{\lambda}^f x,x)$$
which implies that $F(J_{\lambda}^f)\subseteq F(J_{\mu}^f)$.
\end{remark}

\begin{lemma}\label{lem-convex}
Let $f:X\rightarrow]-\infty,+\infty]$ be a convex, proper and lsc function. If $\liminf \lambda_k >0$, then $J_{\lambda_k}^f$ satisfies \eqref{condition}.
\end{lemma}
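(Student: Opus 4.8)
The plan is to verify \eqref{condition} for $T_k=J^f_{\lambda_k}$ by showing that whenever a subsequence satisfies $x_{k_j}\overset{\triangle}{\longrightarrow}p$ and $d(x_{k_j},J^f_{\lambda_{k_j}}x_{k_j})\to 0$, the limit $p$ minimizes $f$; membership in $\bigcap_kF(J^f_{\lambda_k})$ then follows from the inclusion ${\rm Argmin}f\subseteq F(J^f_{\lambda_k})$ supplied by Lemma \ref{arg-fix} (a convex function is $\alpha$-weakly convex for every $\alpha>0$, so the inclusion holds for all $\lambda_k>0$). Writing $u_j:=J^f_{\lambda_{k_j}}x_{k_j}$, the hypothesis $d(x_{k_j},u_j)\to 0$ forces $\{x_{k_j}\}$ and $\{u_j\}$ to have the same asymptotic radius at each point, hence the same asymptotic centers, so that $u_j\overset{\triangle}{\longrightarrow}p$ as well.

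The heart of the argument is a variational inequality for the resolvent. Fixing $y\in X$ and testing the minimality of $u_j$ for the map $z\mapsto f(z)+\frac{1}{2\lambda_{k_j}}d^2(z,x_{k_j})$ against the geodesic point $(1-t)u_j\oplus ty$, then using convexity of $f$ together with the defining CAT(0) inequality and letting $t\to0^+$, I obtain
$$f(u_j)-f(y)\leq\frac{1}{2\lambda_{k_j}}\bigl(d^2(y,x_{k_j})-d^2(u_j,x_{k_j})-d^2(u_j,y)\bigr)=\frac{1}{\lambda_{k_j}}\langle\overrightarrow{x_{k_j}u_j},\overrightarrow{u_jy}\rangle,$$
the last equality being the quasilinearization identity. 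This refinement is exactly the point: the crude minimality bound $f(u_j)-f(y)\leq\frac{1}{2\lambda_{k_j}}(d^2(y,x_{k_j})-d^2(u_j,x_{k_j}))$ retains the non-vanishing term $d^2(y,x_{k_j})$, whereas the quasilinearized form exposes the factor $d(x_{k_j},u_j)$ through the Cauchy-Schwartz inequality, giving
$$f(u_j)-f(y)\leq\frac{1}{\lambda_{k_j}}\,d(x_{k_j},u_j)\,d(u_j,y).$$

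Now $\liminf\lambda_k>0$ keeps $1/\lambda_{k_j}$ eventually bounded, the $\triangle$-convergence of $\{u_j\}$ keeps $d(u_j,y)$ bounded, and $d(x_{k_j},u_j)\to0$, so the right-hand side tends to $0$ and $\limsup_j f(u_j)\leq f(y)$. Since a convex lsc function is $\triangle$-lower semicontinuous and $u_j\overset{\triangle}{\longrightarrow}p$, I conclude $f(p)\leq\liminf_j f(u_j)\leq f(y)$ for every $y\in X$, i.e. $p\in{\rm Argmin}f$, and therefore $p\in\bigcap_kF(J^f_{\lambda_k})$. I expect the main obstacle to be the clean derivation of the variational inequality in the CAT(0) setting, in particular getting the quasilinearization bookkeeping right, since after that the passage to the limit is routine.
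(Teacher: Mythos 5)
Your proposal is correct and follows essentially the same route as the paper's proof: the quasilinearized variational inequality obtained by perturbing the resolvent's minimality along a geodesic (your $t\to 0^+$ parametrization is the mirror image of the paper's $t\to 1^-$), followed by Cauchy--Schwartz, $\liminf\lambda_k>0$, and $\triangle$-lower semicontinuity to conclude $p\in{\rm Argmin}f\subseteq\bigcap_kF(J^f_{\lambda_k})$. Your writeup is in fact slightly more careful than the paper's, since you make explicit both the $\triangle$-convergence $J^f_{\lambda_{k_j}}x_{k_j}\overset{\triangle}{\longrightarrow}p$ (which the paper uses tacitly when applying $\triangle$-lsc) and the inclusion ${\rm Argmin}f\subseteq F(J^f_{\lambda_k})$ via Lemma \ref{arg-fix}.
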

\begin{proof}
Let $\{x_k\}$ be an arbitrary sequence such that $x_k\overset{\triangle}{\longrightarrow} p$ and $d(x_k, J_{\lambda_k}^f x_k)\rightarrow 0$. We want to prove $p\in \bigcap_k F(J_{\lambda_k}^f)$. Note that
$$f(J_{\lambda_k}^f x_k)+\frac{1}{2\lambda_k} d^2(J_{\lambda_k}^f x_k,x_k)\leq f(y)+\frac{1}{2\lambda_k}d^2(y,x_k)$$
Set $y=tJ_{\lambda_k}^f x_k\oplus(1-t)z$, where $t\in (0,1)$ and $z\in X$, then we have
$$f(J_{\lambda_k}^f x_k)+\frac{1}{2\lambda_k} d^2(J_{\lambda_k}^f x_k,x_k)$$
$$\leq tf(J_{\lambda_k}^f x_k)+(1-t)f(z)+\frac{1}{2\lambda_k}
(td^2(J_{\lambda_k}^f x_k,x_k)+(1-t)d^2(z,x_k)-t(1-t)d^2(z,J_{\lambda_k}^f x_k))$$
Therefore
$$f(J_{\lambda_k}^f x_k)-f(z)\leq \frac{1}{2\lambda_k}(d^2(z,x_k)-d^2(J_{\lambda_k}^f x_k,x_k)-td^2(z,J_{\lambda_k}^f x_k))$$
By taking $t\rightarrow1^-$, we can conclude that
$$f(J_{\lambda_k}^f x_k)-f(z)\leq \frac{1}{\lambda_k}\langle\overrightarrow{zJ_{\lambda_k}^f x_k}, \overrightarrow{J_{\lambda_k}^f x_kx_k}\rangle$$
Now, by Cauchy-Schwartz inequality, we have
$$f(J_{\lambda_k}^f x_k)-f(z)\leq \frac{1}{\lambda_k}d(z,J_{\lambda_k}^f x_k)d(J_{\lambda_k}^f x_k,x_k)$$
Since $\liminf \lambda_k >0$, taking liminf and $\triangle$-lower semicontinuity of $f$ shows that $f(p)\leq f(z)$ for all $z\in X$. Hence $p\in {\text{Argmin} f}$ which implies that $p\in \bigcap_k F(J_{\lambda_k}^f)$.
\end{proof}

It is valuable that the following theorem extends the results of \cite{Cholamjiak}.
\begin{theorem}
 Let $f:X\rightarrow]-\infty,+\infty]$ be a convex, proper and lsc function. If $\liminf \lambda_k>0$ and ${\rm Argmin} f\neq\varnothing$, then the sequence $\{x_k\}$ generated by
$$x_{k+1}=\alpha_k u\oplus(1-\alpha_k)J_{\lambda_k}^f x_k,$$
where $u, x_1\in C$ and the sequence $\{\alpha_k\}\subset(0,1)$ satisfies $\lim \alpha_k=0$ and $\sum_{k=1}^{+\infty}\alpha_k= +\infty$, converges strongly to  ${\rm Proj}_{{\rm Argmin} f}u$.
\end{theorem}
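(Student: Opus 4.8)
The plan is to read the given iteration as the Halpern-type scheme \eqref{min-halpern} with $T_k = J_{\lambda_k}^f$ and then deduce the statement as a direct corollary of Theorem \ref{theo-hal}. To invoke that theorem I must verify its two structural hypotheses, namely that $\{J_{\lambda_k}^f\}$ is a strongly quasi-nonexpansive sequence and that it satisfies \eqref{condition}, and I must identify the limit point by showing $\bigcap_k F(J_{\lambda_k}^f) = {\rm Argmin}\, f$, so that ${\rm Proj}_{\bigcap_k F(J_{\lambda_k}^f)} u = {\rm Proj}_{{\rm Argmin}\, f} u$.

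First I would establish the strongly quasi-nonexpansive property. A convex function is in particular quasi-convex and $\alpha$-weakly convex for every $\alpha>0$, so for each $\lambda_k>0$ one may pick $\alpha$ with $\lambda_k<\frac{1}{2\alpha}$, and Lemma \ref{lem-qfn} then shows that every $J_{\lambda_k}^f$ is quasi firmly nonexpansive; in particular $\bigcap_k F(J_{\lambda_k}^f)\supseteq {\rm Argmin}\, f\neq\varnothing$. Expanding the defining inequality $\langle\overrightarrow{xp},\overrightarrow{J_{\lambda_k}^f x\, p}\rangle\geq d^2(J_{\lambda_k}^f x,p)$ via the quasi-linearization identity yields the sharper estimate $d^2(x,J_{\lambda_k}^f x)\leq d^2(x,p)-d^2(J_{\lambda_k}^f x,p)$ for every $p\in F(J_{\lambda_k}^f)$, which immediately gives quasi-nonexpansiveness. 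For the strong version, take a bounded sequence $\{x_k\}$ and $q\in\bigcap_k F(J_{\lambda_k}^f)$ with $d(x_k,q)-d(J_{\lambda_k}^f x_k,q)\to 0$; factoring $d^2(x_k,q)-d^2(J_{\lambda_k}^f x_k,q)=\big(d(x_k,q)-d(J_{\lambda_k}^f x_k,q)\big)\big(d(x_k,q)+d(J_{\lambda_k}^f x_k,q)\big)$ and noting that the second factor stays bounded forces $d^2(x_k,J_{\lambda_k}^f x_k)\to 0$, hence $d(x_k,J_{\lambda_k}^f x_k)\to 0$.

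Next, condition \eqref{condition} is exactly the content of Lemma \ref{lem-convex}, which applies because $\liminf \lambda_k>0$. It then remains to pin down the common fixed-point set. A convex function also satisfies the stated pseudo-convexity condition, since from $f(tx\oplus(1-t)y)\leq tf(x)+(1-t)f(y)$ one reads off $\beta(x,y)=f(y)-f(x)>0$ and $\delta(x,y)=1$ whenever $f(y)>f(x)$; hence Lemma \ref{arg-fix} gives $F(J_{\lambda_k}^f)={\rm Argmin}\, f$ for every $k$, so that $\bigcap_k F(J_{\lambda_k}^f)={\rm Argmin}\, f$. With both hypotheses of Theorem \ref{theo-hal} verified, that theorem delivers $x_k\to {\rm Proj}_{\bigcap_k F(J_{\lambda_k}^f)} u={\rm Proj}_{{\rm Argmin}\, f} u$, as claimed.

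I expect the only genuinely computational step to be the passage from quasi firm nonexpansiveness to the quadratic inequality and then to the strong quasi-nonexpansive property; everything else is bookkeeping, matching the convex hypothesis to the weaker structural assumptions of Lemmas \ref{lem-qfn}, \ref{arg-fix} and \ref{lem-convex} and then quoting Theorem \ref{theo-hal}. The one point deserving care is the identification of the projection target, which rests on promoting the elementary inclusion ${\rm Argmin}\, f\subseteq F(J_{\lambda_k}^f)$ to an equality via the pseudo-convexity of convex functions.
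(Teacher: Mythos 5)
Your proposal is correct and takes essentially the same route as the paper, which likewise deduces the theorem from Theorem \ref{theo-hal} by citing Lemma \ref{lem-qfn} (quasi firmly nonexpansive, hence strongly quasi-nonexpansive) and Lemma \ref{lem-convex} for condition \eqref{condition}, and identifies the limit via Lemma \ref{arg-fix}. The only difference is that you make explicit what the paper leaves implicit: that convexity yields quasi-convexity, $\alpha$-weak convexity and pseudo-convexity (with $\beta(x,y)=f(y)-f(x)$, $\delta=1$), and that quasi firm nonexpansiveness gives $d^2(x,J_{\lambda_k}^f x)\leq d^2(x,p)-d^2(J_{\lambda_k}^f x,p)$, from which the strongly quasi-nonexpansive property follows by the factoring argument.
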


\begin{proof}
Lemma \ref{lem-convex} implies that $J_{\lambda_k}^f$ satisfies \eqref{condition}. Also by Lemma \ref{lem-qfn} $J_{\lambda_k}^f$ is a quasi firmly nonexpansive sequence and therefore strongly quasi-nonexpansive sequence. Now, Theorem \ref{theo-hal} and Lemma 3.8 imply that $\{x_k\}$ converges strongly to  ${\rm Proj}_{{\rm Argmin} f}u$.
\end{proof}

\begin{lemma}\label{lem-closed}
Suppose that $f:X\rightarrow]-\infty,+\infty]$ is proper, lsc and pseudo-convex function and $\liminf \lambda_k>0$, then the sequence $J_{\lambda_k}^f$ is closed. i.e. if $x_k\rightarrow p$ and $d(J_{\lambda_k}^fx_k,x_k)\rightarrow0$ as $k\rightarrow+\infty$, then $p\in F(J^f_{\lambda_k})$ for each $k\geq1$.
\end{lemma}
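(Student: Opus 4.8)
The plan is to reduce the closedness assertion to the single statement $p\in{\rm Argmin}\,f$, and then to invoke the easy inclusion ${\rm Argmin}\,f\subseteq F(J_{\lambda_k}^f)$ established in Lemma \ref{arg-fix}. First I would record that $w_k:=J_{\lambda_k}^f x_k\to p$ strongly, which is immediate from $d(w_k,x_k)\to 0$, $x_k\to p$ and the triangle inequality. Thus, once $p\in{\rm Argmin}\,f$ is proved, $p\in{\rm Argmin}\,f\subseteq F(J_{\lambda_k}^f)$ for every $k$, which is exactly the conclusion.

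To prove $p\in{\rm Argmin}\,f$ I would argue by contradiction, imitating the proof of Lemma \ref{arg-fix} but in a limiting form. Assume $p\notin{\rm Argmin}\,f$, so there is $z\in X$ with $f(z)<f(p)$. Pseudo-convexity (applied to the pair $(z,p)$) then furnishes $\beta:=\beta(z,p)>0$ and $\delta:=\delta(z,p)\in(0,1]$ such that $f(sz\oplus(1-s)p)\leq f(p)-s\beta$ for all $s\in(0,\delta)$. Fixing such an $s$, the key step is to feed the test point $y_s:=sz\oplus(1-s)p$, a geodesic point anchored at the limit $p$ rather than at $x_k$, into the resolvent optimality inequality $f(w_k)+\frac{1}{2\lambda_k}d^2(w_k,x_k)\leq f(y_s)+\frac{1}{2\lambda_k}d^2(y_s,x_k)$.

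Using the pseudo-convexity bound on $f(y_s)$, this becomes $f(w_k)+\frac{1}{2\lambda_k}d^2(w_k,x_k)\leq f(p)-s\beta+\frac{1}{2\lambda_k}d^2(y_s,x_k)$. Now I would pass to the limit in $k$ with $s$ held fixed. Since $\liminf\lambda_k>0$, the factors $\frac{1}{2\lambda_k}$ are eventually bounded by some $\frac{M}{2}$; combined with $d(w_k,x_k)\to 0$ the penalty term on the left is negligible, while $x_k\to p$ gives $d^2(y_s,x_k)\to d^2(y_s,p)=s^2 d^2(z,p)$, the last equality because $y_s$ sits at parameter $s$ on the geodesic from $p$ to $z$. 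Taking $\liminf$ and using lower semicontinuity of $f$ at $p$ (ordinary metric lsc suffices, as the convergence is strong) yields $f(p)\leq\liminf_k f(w_k)\leq f(p)-s\beta+\frac{M}{2}s^2 d^2(z,p)$, hence $\beta\leq\frac{M}{2}s\,d^2(z,p)$. Letting $s\to 0^+$ forces $\beta\leq 0$, contradicting $\beta>0$; therefore $p\in{\rm Argmin}\,f$.

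The main obstacle is precisely the mismatch between where the two inequalities live: pseudo-convexity produces a descent estimate along geodesics emanating from the limit $p$, whereas resolvent optimality compares values against the moving base point $x_k$. Strong convergence $x_k\to p$ is what lets me transport the fixed test point $y_s$ across this gap and evaluate $d^2(y_s,x_k)$ in the limit, and the uniform bound $1/\lambda_k\leq M$ coming from $\liminf\lambda_k>0$ is what keeps the quadratic penalty term controlled. The decisive quantitative point is the order of the two limits, first $k\to\infty$ and then $s\to 0^+$, so that the $O(s^2)$ penalty is dominated by the $O(s)$ descent term $s\beta$ supplied by pseudo-convexity.
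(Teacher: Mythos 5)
Your proof is correct, and it takes a genuinely different route from the paper's, although both turn on the same quantitative mechanism: the $O(s)$ descent supplied by pseudo-convexity dominating the $O(s^2)$ quadratic penalty as $s\to 0^+$. The paper first lets $k\to\infty$ in the resolvent inequality to obtain $f(p)\leq f(y)+\frac{1}{2\lambda}d^2(y,p)$ for all $y\in X$, with a fixed $0<\lambda<\liminf\lambda_k$; it then tests with the auxiliary point $y=J^f_{\lambda}p$, deduces the equality $f(p)=f(J^f_{\lambda}p)+\frac{1}{2\lambda}d^2(p,J^f_{\lambda}p)$, and runs the pseudo-convexity contradiction against the pair $(J^f_{\lambda}p,p)$, concluding $p\in F(J^f_{\lambda})$ for that one $\lambda$ (the stated conclusion for every $\lambda_k$ then rests on the identification $F(J^f_{\lambda})={\rm Argmin}\,f$ of Lemma \ref{arg-fix}). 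You instead keep $k$ as the moving index, test against the fixed geodesic point $y_s=sz\oplus(1-s)p$ built from an alleged better competitor $z$, and prove $p\in{\rm Argmin}\,f$ outright. This buys two real advantages. First, you never need the resolvent $J^f_{\lambda}p$ to exist: for a merely proper, lsc, pseudo-convex $f$, as in the statement of the lemma, existence of the Moreau--Yosida minimizer is not established in the paper (its existence results cover convex and $\alpha$-weakly convex $f$), so the paper's proof carries an implicit assumption that your argument avoids. Second, your conclusion $p\in{\rm Argmin}\,f\subseteq F(J^f_{\lambda_k})$ delivers membership in every $F(J^f_{\lambda_k})$ in one stroke; note that at a global minimizer $p$ the resolvent $J^f_{\lambda_k}p$ automatically exists, is unique and equals $p$ (any other minimizer of the Moreau--Yosida functional at $p$ would have $f$-value strictly below $\min f$), so the inclusion holds without the quasi-convexity and $\alpha$-weak convexity hypotheses under which Lemma \ref{arg-fix} is actually stated.

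One small point to add for completeness: cancelling $f(p)$ in your final inequality requires $f(p)<+\infty$, and if $f(p)=+\infty$ the pseudo-convexity estimate $f(y_s)\leq f(p)-s\beta$ is vacuous. This is repaired in one line before the contradiction argument: testing the resolvent inequality against a fixed $y_0\in D(f)$ and using lower semicontinuity along $w_k\to p$ gives $f(p)\leq\liminf f(w_k)\leq f(y_0)+\frac{M}{2}\sup_k d^2(y_0,x_k)<+\infty$, since $\{x_k\}$ is convergent and $\frac{1}{2\lambda_k}\leq\frac{M}{2}$ eventually. The paper's proof gets this for free because its limiting inequality holds for all $y\in X$; with this one remark inserted, your argument is complete.
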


\begin{proof}
Suppose that $d(J_{\lambda_k}^fx_k,x_k)\rightarrow0$ and $x_k\rightarrow p$ as $k\rightarrow+\infty$. Then by the definition of $J_{\lambda_k}^fx_k$, we get
$$f(J_{\lambda_k}^f x_k)+\frac{1}{2\lambda_k} d^2(J_{\lambda_k}^f x_k,x_k)\leq f(y)+\frac{1}{2\lambda_k}d^2(y,x_k),\ \ \ \forall y\in X$$
By letting $k\rightarrow+\infty$ and using lower semicontinuity of $f$, we get:
$f(p)\leq f(y)+\frac{1}{2\lambda}d^2(y,p)$ where $\liminf \lambda_k>\lambda>0$.
Now, set $y=J^f_{\lambda}p$, we get
$f(p)\leq f(J^f_{\lambda}p)+\frac{1}{2\lambda}d^2(p,J^f_{\lambda}p)$.
By the definition of $J^f_{\lambda}p$ we get:
$f(p)= f(J^f_{\lambda}p)+\frac{1}{2\lambda}d^2(p,J^f_{\lambda}p)$.
If $p\neq J^f_{\lambda}p$, then $f(J^f_{\lambda}p)<f(p)$, then there exists $\beta(J^f_{\lambda}p,p)>0$ and $0<\delta(J^f_{\lambda}p, p)\leq1$ such that
$f(tJ^f_{\lambda}p\oplus(1-t)p)+t\beta(J^f_{\lambda}p,p)<f(p)$ for all $t\in(0,\delta(J^f_{\lambda}p, p))$.
On the other hand by the definition of $J^f_{\lambda}p$, we have
$f(J^f_{\lambda}p)+\frac{1}{2\lambda}d^2(J^f_{\lambda}p,p)\leq f(tJ^f_{\lambda}p\oplus(1-t)p)+\frac{t^2}{2\lambda}d^2(p,J^f_{\lambda}p)$.
Therefore $f(p)-\frac{t^2}{2\lambda}d^2(p,J^f_{\lambda}p)+t\beta(J^f_{\lambda}p,p)<f(p)$
by letting $t\rightarrow0$ we get
$\beta(J^f_{\lambda}p,p)\leq0$ which is a contradiction.
Hence $p\in F(J^f_{\lambda})$.
\end{proof}

\begin{theorem}
 Let $f:X\rightarrow]-\infty,+\infty]$ be  an $\alpha$-weakly convex, pseudo-convex, proper and lsc function where $X$ is a locally compact Hadamard space. Suppose that $\liminf \lambda_k>0$ and ${\rm Argmin} f\neq\varnothing$. Then the sequence $\{x_k\}$ generated by $x_{k+1}=J_{\lambda_k}^fx_k$ (proximal point algorithm)
 converges to an element of ${\rm  Argmin}f$.
\end{theorem}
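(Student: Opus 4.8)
The plan is to obtain genuine metric convergence directly, bypassing the $\Delta$-convergence route of Theorem \ref{TTT}: the new ingredient here is local compactness, which lets us promote a bounded sequence to one with a \emph{metrically} convergent subsequence, so that the strong closedness Lemma \ref{lem-closed} can be used in place of the $\Delta$-type condition \eqref{condition}.

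First I would set up the Fej\'er picture. Since $f$ is $\alpha$-weakly convex the resolvents $J_{\lambda_k}^f$ are well defined, and by Lemma \ref{lem-qfn} each $J_{\lambda_k}^f$ is quasi firmly nonexpansive; moreover Lemma \ref{arg-fix} together with pseudo-convexity gives ${\rm Argmin} f=\bigcap_k F(J_{\lambda_k}^f)\neq\varnothing$. Fix $x^*\in{\rm Argmin} f$. Writing the quasi firmly nonexpansive inequality $\langle\overrightarrow{x_k x^*},\overrightarrow{x_{k+1}x^*}\rangle\geq d^2(x_{k+1},x^*)$ and expanding by quasilinearization gives
$$d^2(x_k,x^*)-d^2(x_{k+1},x^*)\geq d^2(x_k,x_{k+1}).$$
Hence $\{d(x_k,x^*)\}$ is nonincreasing, so $\{x_k\}$ is bounded and $\lim_k d(x_k,x^*)$ exists; and summing the inequality yields $\sum_k d^2(x_k,x_{k+1})<+\infty$, whence $d(x_k,J_{\lambda_k}^f x_k)=d(x_k,x_{k+1})\to 0$.

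Next I would use local compactness. A complete, locally compact geodesic space is proper by the Hopf--Rinow theorem, so the bounded sequence $\{x_k\}$ has a metrically convergent subsequence $x_{k_j}\to p$. Since $\liminf\lambda_k>0$ and $d(J_{\lambda_{k_j}}^f x_{k_j},x_{k_j})\to 0$, applying the closedness Lemma \ref{lem-closed} to this subsequence gives $p\in\bigcap_k F(J_{\lambda_k}^f)={\rm Argmin} f$. Finally, to pass from the subsequence to the whole sequence I would invoke the first step: as $p$ is a common fixed point, $\lim_k d(x_k,p)$ exists, and since $d(x_{k_j},p)\to 0$ this limit equals $0$, so $x_k\to p\in{\rm Argmin} f$.

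The crux is the step via Hopf--Rinow: in a general Hadamard space boundedness yields only a $\Delta$-convergent subsequence, for which one would be forced back to condition \eqref{condition}; it is precisely properness (from local compactness) that upgrades this to metric convergence and thereby unlocks the strong closedness Lemma \ref{lem-closed}. A secondary point worth checking is that the present hypotheses suffice to run Lemmas \ref{lem-qfn} and \ref{arg-fix}, which are stated under quasi-convexity: in both proofs the only use of convexity is that $f$ decreases when one moves slightly from the resolvent point towards a minimizer, and this is exactly what pseudo-convexity supplies, so no genuine gap arises.
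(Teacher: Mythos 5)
Your proof is correct and, in substance, it is the argument the paper intends: the paper's entire proof is the citation ``Theorem \ref{TTT}, Lemmas \ref{lem-qfn}, \ref{arg-fix}, \ref{lem-closed} and Proposition 4.4 of \cite{ak}'', where the result quoted from \cite{ak} is precisely the device that identifies $\Delta$-convergence with metric convergence in a locally compact Hadamard space. So both routes hinge on the same point you single out as the crux: local compactness upgrades the bounded Fej\'er-monotone sequence from having a $\Delta$-convergent subsequence to having a metrically convergent one, so that the strong closedness of Lemma \ref{lem-closed} can replace the unavailable condition \eqref{condition}. If anything, your version is more careful than the paper's: Theorem \ref{TTT} cannot be invoked verbatim here, since its hypothesis \eqref{condition} ($\Delta$-demiclosedness) is never verified for pseudo-convex $f$; the citation only makes sense after one reruns the Fej\'er argument of Theorem \ref{TTT} with strong subsequential limits, which is exactly what you do, with Hopf--Rinow playing the role of Proposition 4.4 of \cite{ak}. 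Your quasilinearization identity giving $d^2(x_k,x^*)-d^2(x_{k+1},x^*)\geq d^2(x_k,x_{k+1})$ is the correct expansion of the quasi firmly nonexpansive inequality, and the subsequence-to-whole-sequence step via the existing limit $\lim_k d(x_k,p)$ is sound.

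One refinement to your closing remark about running Lemmas \ref{lem-qfn} and \ref{arg-fix} under pseudo-convexity: the descent $f\bigl(t\tilde x\oplus(1-t)J_{\lambda}^f x\bigr)\leq f(J_{\lambda}^f x)$ for small $t$ is supplied by pseudo-convexity only when $f(J_{\lambda}^f x)>f(\tilde x)$, i.e.\ when $J_{\lambda}^f x\notin{\rm Argmin}\,f$; if some iterate $x_{k+1}=J_{\lambda_k}^f x_k$ already lies in ${\rm Argmin}\,f$, pseudo-convexity gives nothing. That case is harmless --- the inclusion ${\rm Argmin}\,f\subseteq F(J_{\mu}^f)$ (the trivial half of Lemma \ref{arg-fix}, needing no convexity assumption) makes the sequence constant from then on, so convergence is immediate --- but this case split should be stated to fully justify your claim that no genuine gap arises. (You also share with the paper the implicit assumption $\lambda_k<\frac{1}{2\alpha}$, without which the resolvents of an $\alpha$-weakly convex function need not exist; that is an omission inherited from the statement, not introduced by you.)
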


\begin{proof} A consequence of Theorem \ref{TTT}, Lemmas 3.7, 3.8 and \ref{lem-closed} and Proposition 4.4 of \cite{ak}.
\end{proof}

\subsubsection{\bf Fixed Point of a Lipschitz Quasi-nonexpansive Mapping}

In this subsection we apply our main results in Section 2 to approximate a fixed point of a Lipschitz quasi-nonexpansive mapping by the proximal point algorithm. Similar to the previous section we must prove the resolvent of a Lipschitz quasi-nonexpansive mapping satisfies the conditions of Theorems \ref{TTT} and \ref{theo-hal}. First we recall the definition as well as existence and uniqueness of the resolvent. The resolvent operator $J_{\lambda}^T$ for a nonexpansive mapping $T$ has been defined in the literature for Hadamard spaces (see \cite{bac-rei, kr-3}). The definition for a Lipschitz mapping is similar but it exists only for some parameters $\lambda$.

Let $C\subseteq X$ be closed and convex. Suppose that  $T:C\rightarrow C$ is a mapping and $\alpha>1$ such that $d(Tx,Ty)\leq \alpha d(x,y)$. For $\lambda>0$ and $x\in C$, we define
$T^x_{\lambda}:C\rightarrow C$ as
$$T^x_{\lambda}y=\frac{1}{1+\lambda}x\oplus\frac{\lambda}{1+\lambda}Ty.$$
Now, take $y_1, y_2\in C$, then note that\\
$d(T^x_{\lambda}y_1,T^x_{\lambda}y_2)=d(\frac{1}{1+\lambda}x\oplus\frac{\lambda}{1+\lambda}Ty_1, \frac{1}{1+\lambda}x\oplus\frac{\lambda}{1+\lambda}Ty_2)\leq \frac{\lambda}{1+\lambda}d(Ty_1,Ty_2)\leq \frac{\alpha\lambda}{1+\lambda}d(y_1,y_2)$. In the sequel, if $\frac{\alpha\lambda}{1+\lambda}<1$, then $T^x_{\lambda}$ is a contraction, i.e. if $\lambda<\frac{1}{\alpha-1}$ then $T^x_{\lambda}$ has a unique fixed point which we denote it by $J^T_{\lambda}x$ and it is called the resolvent of $T$ of order $\lambda>0$ at $x$. In fact, $J^T_{\lambda}x=F(T^x_{\lambda})$.
It is easy to see that $F(J_{\lambda}^T)=F(T)$.
First, suppose that $J_{\lambda}^Tx=x$ therefore $x=\frac{1}{1+\lambda}x\oplus\frac{\lambda}{1+\lambda}Tx$ which implies that $Tx=x$. Now, suppose $Tx=x$ hence $x=\frac{1}{1+\lambda}x\oplus\frac{\lambda}{1+\lambda}Tx$, therefore $J_{\lambda}^Tx=x$.
\begin{remark}
If $X=H$ a Hilbert space  and $T$ and $I$ are respectively a nonexpansive and identify mappings, then the resolvent of the maximal monotone operator $I-T$ is exactly $J^T_{\lambda}$ which was defined above.
\end{remark}
In the sequel, we will prove the $\Delta$-convergence of generated sequence by \eqref{ppa}.
Now, let $T:C\rightarrow C$ be aa $\alpha$-Lipschitz and quasi-nonexpansive mapping, where $C$ is closed and convex and $\lambda_k<\frac{1}{\alpha-1}$, we define
$J_{\lambda_k}^T:C\rightarrow C$ as
\begin{equation}
x_{k+1}=J_{\lambda_k}^Tx_k=\frac{1}{1+\lambda_k}x_k\oplus\frac{\lambda_k}{1+\lambda_k}T(J_{\lambda_k}^Tx_k).
\label{ppa}\end{equation}
\begin{lemma}
Let $T:C\rightarrow C$ be a quasi nonexpansive mapping, then $F(T)$ is closed and convex.
\end{lemma}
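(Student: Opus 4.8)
The plan is to treat closedness and convexity separately, in each case relying only on the quasi-nonexpansiveness inequality $d(Tx,p)\leq d(x,p)$ valid for every $p\in F(T)$, together with the CAT(0) comparison inequality stated in the introduction. Since $F(T)\neq\varnothing$ is part of the hypothesis, both properties are meaningful.

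For closedness, I would take a sequence $\{p_n\}\subset F(T)$ converging (in the metric) to some $p$; because $C$ is closed, $p\in C$, and the goal is to show $Tp=p$. Applying quasi-nonexpansiveness with the point $x=p$ and the fixed point $p_n$ gives $d(Tp,p_n)\leq d(p,p_n)$. A single use of the triangle inequality then yields
$$d(Tp,p)\leq d(Tp,p_n)+d(p_n,p)\leq 2\,d(p,p_n)\longrightarrow 0,$$
so $Tp=p$ and hence $F(T)$ is closed.

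For convexity, I would fix $p,q\in F(T)$ and $t\in[0,1]$, set $z=(1-t)p\oplus tq$, and aim to prove $z\in F(T)$. The key step is to invoke the CAT(0) inequality with the external point taken to be $Tz$, namely
$$d^2(z,Tz)\leq (1-t)d^2(p,Tz)+t\,d^2(q,Tz)-t(1-t)d^2(p,q).$$
Quasi-nonexpansiveness applied to the two fixed points $p$ and $q$ gives $d(p,Tz)\leq d(p,z)$ and $d(q,Tz)\leq d(q,z)$, while the defining property of the point $z$ on the geodesic gives $d(p,z)=t\,d(p,q)$ and $d(q,z)=(1-t)d(p,q)$. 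Substituting these bounds, the right-hand side collapses to
$$(1-t)t^2 d^2(p,q)+t(1-t)^2 d^2(p,q)-t(1-t)d^2(p,q)=t(1-t)d^2(p,q)\bigl(t+(1-t)-1\bigr)=0,$$
so $d^2(z,Tz)\leq 0$, forcing $Tz=z$ and thus $z\in F(T)$.

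The argument is essentially a short computation, so there is no serious obstacle; the only point requiring care is to orient the CAT(0) inequality correctly, i.e. to apply it with the geodesic midpoint combination $z=(1-t)p\oplus tq$ as the interpolated argument and $Tz$ as the fixed external point, and to keep the bookkeeping of the coefficients $t$ and $1-t$ consistent with the paper's convention $d(p,z)=t\,d(p,q)$. Once this is set up, the cancellation in the convexity step is automatic.
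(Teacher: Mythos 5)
Your proof is correct and follows essentially the same route as the paper's: closedness via quasi-nonexpansiveness applied at the limit point (the paper skips your explicit triangle inequality and concludes $Tp=p$ directly from $d(p_k,Tp)\leq d(p_k,p)\rightarrow 0$), and convexity via the CAT(0) comparison inequality with $Tz$ as the external point, with the identical cancellation $t(1-t)^2+t^2(1-t)-t(1-t)=0$. The only difference is the cosmetic swap of coefficients ($z=(1-t)p\oplus tq$ versus the paper's $tp\oplus(1-t)q$), which does not affect the argument.
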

\begin{proof}
Take $p, q\in F(T)$ and $t\in [0,1]$, we show that $tp\oplus(1-t)q \in F(T)$ or equivalently $d(tp\oplus(1-t)q, T (tp\oplus(1-t)q))=0$. Note that\\
$d^2(tp\oplus(1-t)q, T (tp\oplus(1-t)q))\leq td^2(p, T (tp\oplus(1-t)q))+(1-t)d^2(q, T (tp\oplus(1-t)q))-t(1-t)d^2(p,q)\leq td^2(p, tp\oplus(1-t)q)+(1-t)d^2(q, tp\oplus(1-t)q)-t(1-t)d^2(p,q)=t(1-t)^2d^2(p,q)+t^2(1-t)d^2(p,q)-t(1-t)d^2(p,q)=0$, i.e $F(T)$ is convex.\\
Now, take $p_k\in F(T)$ such that $p_k\rightarrow p$. Note that
$d(p_k,Tp)\leq d(p_k,p)\rightarrow 0$,
i.e. $p\in F(T)$.
\end{proof}

\begin{lemma}\label{lem-sqn}
Let $T:C\rightarrow C$ be a quasi-nonexpansive mapping and $\alpha$-Lipschitz with $\alpha>1$. If $\{\lambda_k\}$ is a positive sequence, then $J_{\lambda_k}^T$ is a strongly quasi-nonexpansive sequence.
\end{lemma}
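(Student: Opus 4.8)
The plan is to verify directly the three requirements in the definition of a strongly quasi-nonexpansive sequence: that the common fixed-point set is nonempty, that each $J_{\lambda_k}^T$ is quasi-nonexpansive, and that $d(x_k,J_{\lambda_k}^Tx_k)\rightarrow 0$ whenever $\{x_k\}$ is bounded and $d(x_k,q)-d(J_{\lambda_k}^Tx_k,q)\rightarrow 0$ for some $q$ in the common fixed-point set. The first point is immediate from the computation preceding \eqref{ppa}, which shows $F(J_{\lambda_k}^T)=F(T)$ for every $k$; hence $\bigcap_kF(J_{\lambda_k}^T)=F(T)\neq\varnothing$ because $T$ is quasi-nonexpansive.

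The heart of the matter is a single inequality extracted from the CAT(0) convexity inequality applied to the defining identity of the resolvent. Writing $u_k:=J_{\lambda_k}^Tx_k=\frac{1}{1+\lambda_k}x_k\oplus\frac{\lambda_k}{1+\lambda_k}Tu_k$ and fixing $q\in F(T)$, I would first apply the CAT(0) inequality with base point $q$ to obtain
$$d^2(u_k,q)\leq \frac{1}{1+\lambda_k}d^2(x_k,q)+\frac{\lambda_k}{1+\lambda_k}d^2(Tu_k,q)-\frac{\lambda_k}{(1+\lambda_k)^2}d^2(x_k,Tu_k).$$
Since $q\in F(T)$ and $T$ is quasi-nonexpansive, I replace $d^2(Tu_k,q)$ by the larger $d^2(u_k,q)$, move that term to the left, and multiply through by $1+\lambda_k$. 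Finally, using the geodesic ratio $d(x_k,u_k)=\frac{\lambda_k}{1+\lambda_k}d(x_k,Tu_k)$ to eliminate $d(x_k,Tu_k)$, the estimate should collapse to the key inequality
$$\frac{1+\lambda_k}{\lambda_k}\,d^2(x_k,J_{\lambda_k}^Tx_k)\leq d^2(x_k,q)-d^2(J_{\lambda_k}^Tx_k,q).$$

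This one inequality delivers everything. Discarding its nonnegative left-hand side yields $d(J_{\lambda_k}^Tx_k,q)\leq d(x_k,q)$, so each $J_{\lambda_k}^T$ is quasi-nonexpansive. For the strong property, assume $\{x_k\}$ is bounded with $d(x_k,q)-d(J_{\lambda_k}^Tx_k,q)\rightarrow 0$; then both $d(x_k,q)$ and $d(J_{\lambda_k}^Tx_k,q)$ are bounded, so factoring the difference of squares as $\bigl(d(x_k,q)-d(J_{\lambda_k}^Tx_k,q)\bigr)\bigl(d(x_k,q)+d(J_{\lambda_k}^Tx_k,q)\bigr)$ forces $d^2(x_k,q)-d^2(J_{\lambda_k}^Tx_k,q)\rightarrow 0$. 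Because $\frac{1+\lambda_k}{\lambda_k}\geq 1$ for every positive $\lambda_k$, the displayed inequality then gives $d^2(x_k,J_{\lambda_k}^Tx_k)\rightarrow 0$, which is exactly what is required.

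The points demanding care are purely of a bookkeeping nature: matching the mixing coefficients in the CAT(0) inequality correctly (the cross term is $\frac{\lambda_k}{(1+\lambda_k)^2}$, not $\frac{\lambda_k}{1+\lambda_k}$) and correctly converting $d(x_k,Tu_k)$ into $d(x_k,u_k)$ through the geodesic ratio. I note that no upper bound on $\lambda_k$ enters the strong quasi-nonexpansiveness argument — only the trivial bound $\frac{1+\lambda_k}{\lambda_k}\geq 1$ is used — although the constraint $\lambda_k<\frac{1}{\alpha-1}$ is of course implicitly in force so that $J_{\lambda_k}^T$ is well defined.
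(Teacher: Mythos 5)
Your proposal is correct and follows essentially the same route as the paper's proof: apply the CAT(0) inequality to the defining identity $J_{\lambda_k}^Tx_k=\frac{1}{1+\lambda_k}x_k\oplus\frac{\lambda_k}{1+\lambda_k}T(J_{\lambda_k}^Tx_k)$ with base point $q\in F(T)$, use quasi-nonexpansiveness of $T$ to replace $d^2(T(J_{\lambda_k}^Tx_k),q)$ by $d^2(J_{\lambda_k}^Tx_k,q)$, and use the geodesic ratio to convert the cross term, arriving at exactly the paper's key inequality $d^2(x_k,J_{\lambda_k}^Tx_k)\leq \frac{\lambda_k}{1+\lambda_k}\bigl(d^2(x_k,q)-d^2(J_{\lambda_k}^Tx_k,q)\bigr)$. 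You merely make explicit two steps the paper leaves implicit (the difference-of-squares factorization under boundedness, and the checks $F(J_{\lambda_k}^T)=F(T)\neq\varnothing$), which is fine.
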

\begin{proof}
Take $\{x_k\}$ in $C$ and $p\in F(T)$. Now, by definition of $J_{\lambda_k}^T$, we have\\
$d^2(J_{\lambda_k}^Tx_k, p)=d^2(\frac{1}{1+\lambda_k}x_k\oplus\frac{\lambda_k}{1+\lambda_k}T(J_{\lambda_k}^Tx_k), p)\leq
\frac{1}{1+\lambda_k}d^2(x_k,p)+\frac{\lambda_k}{1+\lambda_k}d^2(T(J_{\lambda_k}^Tx_k), p)-\frac{\lambda_k}{(1+\lambda_k)^2}d^2(x_k,T(J_{\lambda_k}^Tx_k))\leq \frac{1}{1+\lambda_k}d^2(x_k,p)+\frac{\lambda_k}{1+\lambda_k}d^2(J_{\lambda_k}^Tx_k,p)
-\frac{1}{\lambda_k}d^2(x_k,J_{\lambda_k}^Tx_k)$.\\
Therefore
$$d^2(x_k,J_{\lambda_k}^Tx_k)\leq \frac{\lambda_k}{1+\lambda_k}(d^2(x_k,p)-d^2(J_{\lambda_k}^Tx_k,p))$$
which shows $J_{\lambda_k}^T$ is strongly quasi nonexpansive.
\end{proof}

\begin{lemma}\label{lem-con}
Let $T:C\rightarrow C$ be an $\alpha$-Lipschitz with $\alpha>1$, demiclosed and quasi-nonexpansive mapping. If $\{\lambda_k\}$ is a positive sequence such that $\liminf \lambda_k>0$, then $J_{\lambda_k}^T$ satisfies \eqref{condition}.
\end{lemma}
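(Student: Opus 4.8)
The plan is to reduce condition \eqref{condition} for the resolvent sequence $J_{\lambda_k}^T$ to the demiclosedness of the underlying mapping $T$, using the fixed-point characterization $J_{\lambda}^Tx=\frac{1}{1+\lambda}x\oplus\frac{\lambda}{1+\lambda}T(J_{\lambda}^Tx)$ and the identity $F(J_{\lambda}^T)=F(T)$ established just above. Concretely, I would fix a subsequence $\{x_{k_j}\}\subset\{x_k\}$ together with $\{J_{\lambda_{k_j}}^T\}\subset\{J_{\lambda_k}^T\}$ such that $x_{k_j}\overset{\triangle}{\longrightarrow}p\in C$ and $\lim_j d(x_{k_j},J_{\lambda_{k_j}}^Tx_{k_j})=0$, and write $u_j:=J_{\lambda_{k_j}}^Tx_{k_j}$ for brevity. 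Since $\bigcap_k F(J_{\lambda_k}^T)=F(T)$, it suffices to show $p\in F(T)$.

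The key computation is that $u_j$ lies on the geodesic from $x_{k_j}$ to $Tu_j$, namely $u_j=\frac{1}{1+\lambda_{k_j}}x_{k_j}\oplus\frac{\lambda_{k_j}}{1+\lambda_{k_j}}Tu_j$. Reading off the two geodesic distances gives $d(x_{k_j},u_j)=\frac{\lambda_{k_j}}{1+\lambda_{k_j}}d(x_{k_j},Tu_j)$ and $d(u_j,Tu_j)=\frac{1}{1+\lambda_{k_j}}d(x_{k_j},Tu_j)$, so that $d(u_j,Tu_j)=\frac{1}{\lambda_{k_j}}d(x_{k_j},u_j)$. Because $\liminf\lambda_k>0$, there is $\lambda_0>0$ with $\lambda_{k_j}\geq\lambda_0$ for large $j$, whence $d(u_j,Tu_j)\leq\frac{1}{\lambda_0}d(x_{k_j},u_j)\rightarrow 0$. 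Note that the $\alpha$-Lipschitz hypothesis is what guarantees that $J_{\lambda_{k_j}}^Tx_{k_j}$ is well defined for $\lambda_{k_j}<\frac{1}{\alpha-1}$, so this step is legitimate.

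It then remains to transfer the $\Delta$-convergence from $\{x_{k_j}\}$ to $\{u_j\}$. Since $|d(y,x_{k_j})-d(y,u_j)|\leq d(x_{k_j},u_j)\rightarrow 0$ for every $y\in C$, the two sequences have the same asymptotic radius function and hence the same asymptotic center along every subsequence; together with boundedness of $\{u_j\}$ (inherited from $\{x_{k_j}\}$, since $d(x_{k_j},u_j)\rightarrow 0$) this yields $u_j\overset{\triangle}{\longrightarrow}p$. Now $d(u_j,Tu_j)\rightarrow 0$ and $u_j\overset{\triangle}{\longrightarrow}p$, so the demiclosedness of $T$ forces $p\in F(T)=\bigcap_k F(J_{\lambda_k}^T)$, which is precisely \eqref{condition}.

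I expect the only genuinely delicate point to be the passage $d(u_j,Tu_j)\rightarrow 0$: the resolvent estimate only controls $d(x_{k_j},u_j)$, and converting this into control of $d(u_j,Tu_j)$ costs a factor $\frac{1}{\lambda_{k_j}}$, which is exactly where the hypothesis $\liminf\lambda_k>0$ is indispensable (without it the factor could blow up and demiclosedness could not be invoked). The remaining ingredients, namely the stability of $\Delta$-limits under vanishing metric perturbations and the direct appeal to demiclosedness, are routine.
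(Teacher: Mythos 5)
Your proof is correct and follows essentially the same route as the paper's: both extract $d(u_j,Tu_j)\rightarrow 0$ from the geodesic identity $u_j=\frac{1}{1+\lambda_{k_j}}x_{k_j}\oplus\frac{\lambda_{k_j}}{1+\lambda_{k_j}}Tu_j$ using $\liminf\lambda_k>0$ (the paper via $d(x_k,Tu_k)\rightarrow 0$ and the triangle inequality, you via the exact relation $d(u_j,Tu_j)=\frac{1}{\lambda_{k_j}}d(x_{k_j},u_j)$), then transfer the $\Delta$-limit to $\{u_j\}$ and invoke demiclosedness of $T$ together with $F(J_{\lambda}^T)=F(T)$. Your justification of the $\Delta$-convergence transfer via asymptotic radii is in fact slightly more careful than the paper, which asserts it without comment.
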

\begin{proof}
Let $\{x_k\}$ be an arbitrary sequence such that $x_k\overset{\triangle}{\longrightarrow} p$ and $d(x_k, J_{\lambda_k}^T x_k)\rightarrow 0$. We want to prove that $p\in \bigcap_k F(J_{\lambda_k}^T)$. Note that $d(x_k, J_{\lambda_k}^T x_k)\rightarrow 0$ implies that $\frac{\lambda_k}{1+\lambda_k}d(x_k, T(J_{\lambda_k}^T x_k))\rightarrow 0$. Since $\liminf \lambda_k>0$ hence $d(x_k,T(J_{\lambda_k}^T x_k))\rightarrow 0$. Therefore we have $d(J_{\lambda_k}^T x_k, T(J_{\lambda_k}^T x_k))\rightarrow 0$. Now, since $T$ is demiclosed and $J_{\lambda_k}^T x_k\overset{\triangle}{\longrightarrow} p$, we get $p\in \bigcap_k F(T_k)$.
\end{proof}
\begin{corollary}
Let $T:C\rightarrow C$ be an $\alpha$-Lipschitz with $\alpha>1$, demiclosed, quasi-nonexpansive mapping and $\{\lambda_k\}$ be a positive sequence such that $\liminf \lambda_k>0$. If we define $x_{k+1}=J_{\lambda_k}^Tx_k$ such that $x_0\in C$, then the sequence $\{x_k\}$ $\Delta$-converges to an element of $F(T)$.
\end{corollary}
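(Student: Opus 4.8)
The plan is to recognize this corollary as a direct instance of Theorem \ref{TTT} applied to the sequence of resolvents $T_k := J_{\lambda_k}^T$. Three ingredients must be assembled, each of which has already been prepared earlier in this subsection: first, that $\{J_{\lambda_k}^T\}$ is a strongly quasi-nonexpansive sequence; second, that it satisfies the demiclosedness condition \eqref{condition}; and third, that its common fixed point set coincides with $F(T)$, so that the conclusion can be phrased in terms of $F(T)$.

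First I would invoke Lemma \ref{lem-sqn}, which shows that for any positive sequence $\{\lambda_k\}$ the sequence $\{J_{\lambda_k}^T\}$ is strongly quasi-nonexpansive; note that for this step the stronger hypothesis $\liminf\lambda_k>0$ is not yet needed. Next I would invoke Lemma \ref{lem-con}: here the assumptions that $T$ is $\alpha$-Lipschitz with $\alpha>1$, demiclosed, and that $\liminf\lambda_k>0$ are precisely what guarantee that $\{J_{\lambda_k}^T\}$ satisfies \eqref{condition}. Since $T$ is quasi-nonexpansive we have $F(T)\neq\varnothing$, and the identity $F(J_{\lambda}^T)=F(T)$ established just before Lemma \ref{lem-sqn} gives $\bigcap_k F(J_{\lambda_k}^T)=F(T)\neq\varnothing$.

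With these facts in hand, the iteration $x_{k+1}=J_{\lambda_k}^T x_k$ is exactly the composition $x_{k+1}=J_{\lambda_k}^T\cdots J_{\lambda_1}^T x_0$ appearing in Theorem \ref{TTT} with $T_k=J_{\lambda_k}^T$. Applying Theorem \ref{TTT} therefore yields that $\{x_k\}$ $\Delta$-converges to an element of $\bigcap_k F(J_{\lambda_k}^T)$, which equals $F(T)$, completing the argument.

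I do not expect a genuine obstacle here, since all the technical weight has already been carried by Lemmas \ref{lem-sqn} and \ref{lem-con}. The only points requiring a moment of care are verifying that the recursive iteration matches the composition form of Theorem \ref{TTT} under the identification $T_k=J_{\lambda_k}^T$, and confirming that the hypotheses of the two lemmas (a positive sequence with $\liminf\lambda_k>0$, together with $\alpha$-Lipschitzness, demiclosedness, and quasi-nonexpansiveness of $T$) are simultaneously in force, which they are by assumption.
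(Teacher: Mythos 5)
Your proposal is correct and follows exactly the paper's own route: the paper proves this corollary as an immediate consequence of Lemmas \ref{lem-sqn} and \ref{lem-con} together with Theorem \ref{TTT}, precisely as you do. Your additional remark that $\bigcap_k F(J_{\lambda_k}^T)=F(T)$ (via the identity $F(J_{\lambda}^T)=F(T)$ established before the lemmas) just makes explicit a step the paper leaves implicit.
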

\begin{proof}
A consequence of Lemmas \ref{lem-sqn}, \ref{lem-con} and Theorem \ref{TTT}.
\end{proof}
\begin{theorem}
Let $T:C\rightarrow C$ be an $\alpha$-Lipschitz with $\alpha>1$, demiclosed  and quasi-nonexpansive. If $\liminf \lambda_k>0$ and the sequence $\{x_k\}$ generated by
$$x_{k+1}=\alpha_k u\oplus(1-\alpha_k)J_{\lambda_k}^T x_k,$$
where $u, x_1\in C$ and the sequence $\{\alpha_k\}\subset(0,1)$ satisfies $\lim \alpha_k=0$ and $\sum_{k=1}^{+\infty}\alpha_k= +\infty$, then $\{x_k\}$ converges strongly to  ${\rm Proj}_{F(T)}u$.
\end{theorem}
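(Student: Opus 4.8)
The plan is to recognize the displayed iteration as a special instance of the Halpern-type algorithm \eqref{min-halpern} with $T_k := J_{\lambda_k}^T$, and then to invoke Theorem \ref{theo-hal}. To do so I would verify that the sequence $\{J_{\lambda_k}^T\}$ meets the two structural hypotheses demanded there: that it is a strongly quasi-nonexpansive sequence, and that it satisfies the demiclosedness condition \eqref{condition}. Both of these have already been isolated as separate lemmas, so the work of this theorem is essentially to check that their hypotheses are in force and to assemble the conclusions.

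The strong quasi-nonexpansiveness of $\{J_{\lambda_k}^T\}$ is exactly the content of Lemma \ref{lem-sqn}, which applies because $\{\lambda_k\}$ is a positive sequence (indeed $\liminf \lambda_k > 0 > 0$ guarantees positivity). That $\{J_{\lambda_k}^T\}$ satisfies \eqref{condition} is precisely Lemma \ref{lem-con}, whose standing assumptions—that $T$ is $\alpha$-Lipschitz with $\alpha > 1$, demiclosed, and quasi-nonexpansive, together with $\liminf \lambda_k > 0$—are all among the hypotheses of the present theorem. Thus the two inputs required by Theorem \ref{theo-hal} are available without any further estimate.

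It remains only to name the limit explicitly. Since we established earlier that $F(J_{\lambda}^T) = F(T)$ for every order $\lambda$, the common fixed point set satisfies $\bigcap_k F(J_{\lambda_k}^T) = F(T)$; in particular it is nonempty and independent of $k$, so the projection ${\rm Proj}_{\bigcap_k F(J_{\lambda_k}^T)} u$ appearing in Theorem \ref{theo-hal} is just ${\rm Proj}_{F(T)} u$. Applying Theorem \ref{theo-hal} then gives strong convergence of $\{x_k\}$ to ${\rm Proj}_{F(T)} u$, as claimed. I do not expect a genuine obstacle here: all of the analytic labor has been discharged in Lemmas \ref{lem-sqn} and \ref{lem-con}, and the only point meriting a moment's care is the identification $\bigcap_k F(J_{\lambda_k}^T) = F(T)$, which is what permits the otherwise abstract projection target to be stated in terms of $F(T)$ alone.
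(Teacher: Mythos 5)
Your proposal is correct and follows exactly the paper's own proof: invoking Lemma \ref{lem-sqn} for strong quasi-nonexpansiveness, Lemma \ref{lem-con} for condition \eqref{condition}, then Theorem \ref{theo-hal}, and finally the identification $F(J_{\lambda_k}^T)=F(T)$ established earlier to rewrite the projection target. (The stray ``$\liminf \lambda_k > 0 > 0$'' is just a typo and harmless.)
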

\begin{proof}
$J_{\lambda_k}^T$ is strongly quasi-nonexpansive sequence by Lemma \ref{lem-sqn}. Also Lemma \ref{lem-con} shows that the sequence $\{J_{\lambda_k}^T\}$
satisfies \eqref{condition}. Now, Theorem \ref{theo-hal} implies that $\{x_k\}$ converges strongly to  ${\rm Proj}_{\bigcap_kF(J_{\lambda_k}^T)}u$.
The result follows because $F(J_{\lambda_k}^T)=F(T)$.
\end{proof}

\subsection{\bf Pseudo-monotone Equilibrium Problems}
Let $K\subseteq X$ be closed and convex. Suppose that $f:K\times K\rightarrow\mathbb{R}$ is a bifunction. we recall the definitions of pseudo-monotone and $\theta$-under monotone bifunctions.\\ $f$ is called pseudo-monotone, iff \\ Whenever $f(x,y)\geq0$ with $x,y\in K$ it holds that $f(y,x)\leq0$.\\
$f$ is called $\theta$-under monotone, iff\\
 There exists $\theta\geq0$ such that
$f(x,y)+f(y,x)\leq \theta d^2(x,y)$, for all $x,y\in K$.\\ In \cite{km-1} has been shown that for a given $x\in X$ and $\lambda>\theta$, there is a unique point denoted by $J_{\lambda}^fx$ such that
\begin{equation}
f(J_{\lambda}^fx,y)+\lambda\langle
\overrightarrow{ xJ_{\lambda}^fx}, \overrightarrow{J_{\lambda}^fxy}\rangle\geq0, \ \ \ \ \forall y\in K
\label{EP-resolvent}\end{equation}
$J_{\lambda}^fx$ is called the resolvent of $f$ of order $\lambda$ at $x \in X$. Take a sequence of regularization parameters $\{\lambda_k\}\subset
(\theta, \bar{\lambda}]$,
for some $\bar{\lambda} > \theta$ and $x_0 \in X$. The proximal point algorithm for approximation of an equilibrium point of $f$ proposed by $x_{k+1}=J_{\lambda_k}^fx_k$that studied by Iusem and Sosa in \cite{is-2} in Hilbert spaces. In this subsection we show that $\Delta$-  convergence of the proximal point algorithm and its Halpern version to an equilibrium point of $f$ is a consequence of the results of  Section 2 by assuming existence
of a sequence that satisfies \eqref{EP-resolvent}. The set of all equilibrium point of $f$ is denoted by $S(f,K)$.
In \eqref{EP-resolvent}, it is obvious that $F(J_{\lambda}^f)\subseteq S(f,K)$ and if $f$ is pseudo-monotone, then  $S(f,K)\subseteq F(J_{\lambda}^f)$.

\begin{lemma}\label{EP-lem1}
Let $f:K\times K\rightarrow\mathbb{R}$ be a pseudo-monotone and $\theta$-under monotone  bifunction and suppose that  $f(x,x)=0$ and $f(x,\cdot)$ is lsc and convex for all $x\in K$. If $S(f,K)\neq\varnothing$ and $f(\cdot,y)$ is $\bigtriangleup$-upper semicontinuous for all $y\in K$, then $J_{\lambda_k}^f$ is strongly quasi-nonexpansive sequence.
\end{lemma}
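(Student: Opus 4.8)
The plan is to extract from the defining variational inequality \eqref{EP-resolvent} a quasi firmly nonexpansive type estimate, and then to read off both the quasi-nonexpansiveness and the strong quasi-nonexpansive limit directly from it. I would first observe that the common fixed point set is nonempty: as recorded just after \eqref{EP-resolvent}, one always has $F(J_{\lambda}^f)\subseteq S(f,K)$, while pseudo-monotonicity gives the reverse inclusion, so $\bigcap_k F(J_{\lambda_k}^f)=S(f,K)\neq\varnothing$ for the admissible parameters $\lambda_k>\theta$. This settles the first clause in the definition of a strongly quasi-nonexpansive sequence.

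The heart of the argument is to fix $p\in S(f,K)$, set $z_k:=J_{\lambda_k}^f x_k$, and test \eqref{EP-resolvent} against $y=p$, obtaining $f(z_k,p)+\lambda_k\langle\overrightarrow{x_k z_k},\overrightarrow{z_k p}\rangle\geq0$. Since $p$ is an equilibrium point, $f(p,z_k)\geq0$, and pseudo-monotonicity then forces $f(z_k,p)\leq0$; as $\lambda_k>0$ this isolates $\langle\overrightarrow{x_k z_k},\overrightarrow{z_k p}\rangle\geq0$. Expanding this inner product with the quasi-linearization identity yields $d^2(x_k,z_k)\leq d^2(x_k,p)-d^2(z_k,p)$. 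In particular $d(z_k,p)\leq d(x_k,p)$, so each $J_{\lambda_k}^f$ is quasi-nonexpansive, and we are left only with the defining convergence.

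To verify that convergence I would take a bounded sequence $\{x_k\}$ with $d(x_k,q)-d(z_k,q)\to0$ for some $q\in S(f,K)$, apply the estimate above with $p=q$, and factor the difference of squares as $d^2(x_k,z_k)\leq (d(x_k,q)-d(z_k,q))(d(x_k,q)+d(z_k,q))$. The first factor tends to $0$ by hypothesis while the second stays bounded (boundedness of $\{z_k\}$ follows from quasi-nonexpansiveness), so $d(x_k,J_{\lambda_k}^f x_k)\to0$, which is the desired conclusion.

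The only subtle point is the direction in which pseudo-monotonicity is invoked: one has to reinsert the equilibrium point $p$ as the test vector $y$ in \eqref{EP-resolvent} so that the sign of $f(z_k,p)$ becomes controllable through $f(p,z_k)\geq0$; after that the computation is routine quasi-linearization bookkeeping. It is worth noting that the remaining hypotheses — $\theta$-under monotonicity, convexity and lower semicontinuity of $f(x,\cdot)$, and $\Delta$-upper semicontinuity of $f(\cdot,y)$ — are used only to guarantee, via \cite{km-1}, that the resolvent $J_{\lambda_k}^f$ exists and is single-valued, and play no role in the estimate itself.
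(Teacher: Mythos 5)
Your proposal is correct and follows essentially the same route as the paper: you test \eqref{EP-resolvent} at $y=p$ for $p\in S(f,K)$, use pseudo-monotonicity to force $f(J_{\lambda_k}^f x_k,p)\leq 0$, and read off the quasi firmly nonexpansive estimate $d^2(x_k,J_{\lambda_k}^f x_k)\leq d^2(x_k,p)-d^2(J_{\lambda_k}^f x_k,p)$ via quasi-linearization. The only difference is that you make explicit the routine final step (the identification $\bigcap_k F(J_{\lambda_k}^f)=S(f,K)$ and the difference-of-squares factorization giving $d(x_k,J_{\lambda_k}^f x_k)\to 0$), which the paper leaves implicit after stating the key inequality.
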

\begin{proof}
Take $p\in S(f,K)$ and set $y=p$ in \eqref{EP-resolvent}, we obtain
$$f(J_{\lambda_k}^fx_k,p)+\lambda_k\langle\overrightarrow{ x_kJ_{\lambda_k}^fx_k}, \overrightarrow{J_{\lambda_k}^fx_kp}\rangle\geq0$$
Since $p$ is an equilibrium point and $f$ is pseudo-monotone, therefore $f(J_{\lambda_k}^fx_k,p)\leq0$. Hence
$$\langle\overrightarrow{ x_kJ_{\lambda_k}^fx_k}, \overrightarrow{J_{\lambda_k}^fx_kp}\rangle\geq0$$
which implies that $$d^2(x_k,J_{\lambda_k}^Tx_k)\leq d^2(x_k,p)-d^2(J_{\lambda_k}^Tx_k,p)$$
Therefore $J_{\lambda_k}^T$ is strongly quasi-nonexpansive.
\end{proof}
\
\begin{lemma}\label{EP-lem2}
Let $f:K\times K\rightarrow\mathbb{R}$ be a pseudo-monotone and $\theta$-under monotone  bifunction and suppose that  $f(x,x)=0$ and $f(x,\cdot)$ is lsc and convex for all $x\in K$. If $S(f,K)\neq\varnothing$ and $f(\cdot,y)$ is $\bigtriangleup$-upper semicontinuous for all $y\in K$, then $J_{\lambda_k}^f$ satisfies \eqref{condition}.
\end{lemma}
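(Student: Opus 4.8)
The plan is to verify \eqref{condition} directly for the choice $T_k=J_{\lambda_k}^f$. First I would record, as observed just before Lemma \ref{EP-lem1}, that under pseudo-monotonicity the fixed point sets are parameter-independent: one always has $F(J_\lambda^f)\subseteq S(f,K)$, while pseudo-monotonicity gives the reverse inclusion, so $\bigcap_k F(J_{\lambda_k}^f)=S(f,K)$. Consequently it suffices to take a subsequence with $x_{k_j}\overset{\triangle}{\longrightarrow}p$ and $d(x_{k_j},J_{\lambda_{k_j}}^f x_{k_j})\to 0$ and to prove that $f(p,y)\ge 0$ for every $y\in K$, i.e. that $p\in S(f,K)$.

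Write $u_j:=J_{\lambda_{k_j}}^f x_{k_j}$. Since $x_{k_j}\overset{\triangle}{\longrightarrow}p$ the sequence $\{x_{k_j}\}$ is bounded, and because $d(x_{k_j},u_j)\to 0$ the sequence $\{u_j\}$ is bounded too; moreover the triangle inequality gives $\limsup_j d(z,u_j)=\limsup_j d(z,x_{k_j})$ for every $z\in X$, so $\{u_j\}$ and $\{x_{k_j}\}$ share the same asymptotic center along every subsequence, whence $u_j\overset{\triangle}{\longrightarrow}p$ as well. This transfer of $\Delta$-convergence under a vanishing perturbation is exactly the device already used in Lemma \ref{lem-ishikawa-2} and Lemma \ref{lem-con}.

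Next I would pass to the limit in the defining inequality \eqref{EP-resolvent} taken at $x=x_{k_j}$, $\lambda=\lambda_{k_j}$, namely
$$f(u_j,y)+\lambda_{k_j}\langle\overrightarrow{x_{k_j}u_j},\overrightarrow{u_jy}\rangle\ge 0,\qquad\forall\, y\in K.$$
By the Cauchy-Schwartz inequality $|\langle\overrightarrow{x_{k_j}u_j},\overrightarrow{u_jy}\rangle|\le d(x_{k_j},u_j)\,d(u_j,y)$, and since $\{u_j\}$ is bounded, $\lambda_{k_j}\le\bar{\lambda}$, and $d(x_{k_j},u_j)\to 0$, the second summand tends to $0$. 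Hence $\liminf_j f(u_j,y)\ge 0$ for each fixed $y\in K$. Invoking the $\Delta$-upper semicontinuity of $f(\cdot,y)$ along $u_j\overset{\triangle}{\longrightarrow}p$ then yields $f(p,y)\ge\limsup_j f(u_j,y)\ge\liminf_j f(u_j,y)\ge 0$; as $y$ is arbitrary we obtain $p\in S(f,K)=\bigcap_k F(J_{\lambda_k}^f)$, which is precisely \eqref{condition}.

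The argument is short, and the only delicate points are the two limiting passages: the transfer of $\Delta$-convergence from $\{x_{k_j}\}$ to $\{u_j\}$, and the correct combination of the lower estimate $\liminf_j f(u_j,y)\ge 0$ coming from the resolvent inequality with the upper estimate $\limsup_j f(u_j,y)\le f(p,y)$ supplied by $\Delta$-upper semicontinuity. It is worth stressing that pseudo-monotonicity enters only through the final identification $S(f,K)=F(J_{\lambda_k}^f)$, and that the boundedness of the parameters $\lambda_k\le\bar{\lambda}$ is what forces the quasi-linearization term to vanish in the limit.
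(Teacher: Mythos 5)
Your proof is correct and follows essentially the same route as the paper's: both pass to the limit in the resolvent inequality \eqref{EP-resolvent} using the Cauchy--Schwartz bound $\lambda_k d(x_k,J_{\lambda_k}^f x_k)\,d(J_{\lambda_k}^f x_k,y)\rightarrow 0$ to obtain $\liminf f(J_{\lambda_k}^f x_k,y)\geq 0$, then apply the $\Delta$-upper semicontinuity of $f(\cdot,y)$ along $J_{\lambda_k}^f x_k\overset{\triangle}{\longrightarrow}p$ to conclude $f(p,y)\geq 0$, and finally identify $S(f,K)$ with $\bigcap_k F(J_{\lambda_k}^f)$ via pseudo-monotonicity. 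Your write-up is slightly more explicit on two points the paper only asserts, namely the transfer of $\Delta$-convergence from $\{x_{k_j}\}$ to the resolvent sequence and the role of the bound $\lambda_k\leq\bar{\lambda}$, but the underlying argument is the same.
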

\begin{proof}
Fix $y\in K$. Let $\{x_k\}$ be an arbitrary sequence such that $x_k\overset{\triangle}{\longrightarrow} p$ and $d(x_k, J_{\lambda_k}^f x_k)\rightarrow 0$. We want to prove $p\in \bigcap_k F(J_{\lambda_k}^f)$. Note that\\
$0\leq f(J_{\lambda_k}^f x_k,y)+\lambda_k \langle  \overrightarrow{x_kJ_{\lambda_k}^f x_k },\overrightarrow{J_{\lambda_k}^f x_ky }\rangle $
$\leq f(J_{\lambda_k}^f x_k,y)+\lambda_k d(x_k,J_{\lambda_k}^f x_k)d(J_{\lambda_k}^f x_k,y)$.\\
Since $\{\lambda_k\}$ and $\{x_k\}$ are bounded and  $\lim d(J_{\lambda_k}^f x_k,x_k)=0$, we have:
\begin{equation}
0\leq \liminf f(J_{\lambda_k}^f x_k,y), \   \   \  \forall y\in K.
\label{}\end{equation}
On the other hand, since $\lim d(J_{\lambda_k}^f x_k,x_k)=0$, therefore $J_{\lambda_k}^f x_k\overset{\triangle}{\longrightarrow}p$. Now since $f(\cdot,y)$ is $\triangle$-upper semicontinuous for all $y\in K$, we have:
$$0\leq \liminf f(J_{\lambda_k}^f x_k,y)\leq \limsup f(J_{\lambda_k}^f x_k,y)\leq f(p,y)$$
for all $y\in K$. So that $p\in S(f,K)$, i.e. $p\in \bigcap_k F(J_{\lambda_k}^f)$.
\end{proof}
The following theorem is one of the consequences of Section 2 (to see an independent proof, see \cite{km-1}).
\begin{theorem}\label{delta convergence ppa}
Let $f:K\times K\rightarrow\mathbb{R}$ be a pseudo-monotone and $\theta$-under monotone  bifunction and suppose that  $f(x,x)=0$ and $f(x,\cdot)$ is lsc and convex for all $x\in K$. If $S(f,K)\neq\varnothing$ and $f(\cdot,y)$ is $\bigtriangleup$-upper semicontinuous for all $y\in K$, then the sequence $\{x_k\}$ generated by (\ref{EP-resolvent}), is $\triangle$-convergent to an element of $S(f,K)$.\\
\end{theorem}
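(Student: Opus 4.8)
The plan is to recognize this statement as a direct instance of the abstract $\Delta$-convergence result, Theorem \ref{TTT}, applied to the resolvent sequence $T_k := J_{\lambda_k}^f$. Since the sequence $x_{k+1}=J_{\lambda_k}^f x_k$ generated by \eqref{EP-resolvent} is precisely an iteration of the form \eqref{iteration}, everything reduces to verifying that $\{J_{\lambda_k}^f\}$ meets the two hypotheses of Theorem \ref{TTT}: that it is a strongly quasi-nonexpansive sequence, and that it satisfies the demiclosedness condition \eqref{condition}.

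First I would invoke Lemma \ref{EP-lem1}, whose hypotheses coincide exactly with those of the present theorem, to conclude that $\{J_{\lambda_k}^f\}$ is a strongly quasi-nonexpansive sequence. Then I would invoke Lemma \ref{EP-lem2}, again under the identical hypotheses, to conclude that $\{J_{\lambda_k}^f\}$ satisfies \eqref{condition}. With both hypotheses established, Theorem \ref{TTT} immediately yields that $\{x_k\}$ is $\Delta$-convergent to some element $p \in \bigcap_k F(J_{\lambda_k}^f)$.

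The only point that genuinely requires attention is translating this common-fixed-point set into the equilibrium set $S(f,K)$. Here I would use the observation recorded just after \eqref{EP-resolvent}: for every admissible $\lambda$ one always has $F(J_{\lambda}^f) \subseteq S(f,K)$, and since $f$ is pseudo-monotone the reverse inclusion $S(f,K) \subseteq F(J_{\lambda}^f)$ also holds, so that $F(J_{\lambda_k}^f) = S(f,K)$ for each $k$. Intersecting over $k$ gives $\bigcap_k F(J_{\lambda_k}^f) = S(f,K)$, whence the $\Delta$-limit $p$ lies in $S(f,K)$, as required.

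Because both analytic lemmas are already proved, I do not expect a real obstacle; the task is one of assembling the available pieces. The subtlest step is the identification $F(J_{\lambda_k}^f)=S(f,K)$, which silently relies on pseudo-monotonicity of $f$: without it one retains only the inclusion $F(J_{\lambda}^f) \subseteq S(f,K)$, and the argument would place $p$ in the possibly smaller common-fixed-point set rather than in $S(f,K)$ itself.
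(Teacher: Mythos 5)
Your proposal is correct and matches the paper's proof exactly: the paper likewise derives the theorem as an immediate consequence of Lemma \ref{EP-lem1}, Lemma \ref{EP-lem2} and Theorem \ref{TTT}. Your explicit verification that pseudo-monotonicity gives $\bigcap_k F(J_{\lambda_k}^f)=S(f,K)$ is a detail the paper leaves to the remark following \eqref{EP-resolvent}, and it is a worthwhile addition.
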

\begin{proof}
It is a consequence of Lemma \ref{EP-lem1}, Lemma \ref{EP-lem2} and Theorem \ref{TTT}.
\end{proof}

Take a sequence of regularization parameters $\{\lambda_k\}\subset
(\theta, \bar{\lambda}]$,
for some $\bar{\lambda} > \theta$ and $x_0 \in X$.
Consider the following Halpern regularization of the proximal point algorithm for equilibrium problem:
\begin{equation}\begin{cases}f(J_{\lambda_k}^fx_k,y)+\lambda_{k}\langle \overrightarrow{x_{k}J_{\lambda_k}^fx_k},\overrightarrow{J_{\lambda_k}^fx_ky}\rangle \geq 0 ,\ \ \ \ \   \forall y\in K,\\
x_{k+1}=\alpha_k u\oplus(1-\alpha_k)J_{\lambda_k}^fx_k,\\
\end{cases}\label{bi-hc}\end{equation}
where $u \in X$ and the sequence $\{\alpha_k\}\subset(0,1)$ satisfies $\lim \alpha_k=0$ and $\sum_{k=1}^{+\infty}\alpha_k= +\infty$. We will prove the strong convergence of the generated sequence by (\ref{bi-hc}) to an equilibrium point of $f$ by assuming existence
of a sequence that satisfies \eqref{bi-hc}. In fact, we prove $x_k\rightarrow x^*=Proj_{S(f,K)}u$.

\begin{theorem}\label{strong convergence halpern}
Let $f:K\times K\rightarrow\mathbb{R}$ be a pseudo-monotone and $\theta$-under monotone  bifunction and suppose that  $f(x,x)=0$ and $f(x,\cdot)$ is lsc and convex for all $x\in K$. If $S(f,K)\neq\varnothing$ and $f(\cdot,y)$ is $\bigtriangleup$-upper semicontinuous for all $y\in K$, then $\{x_k\}$ generated by \eqref{bi-hc} converges strongly to ${\rm Proj}_{S(f,K)}u$.
\end{theorem}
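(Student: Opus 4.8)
The plan is to recognize that the iteration \eqref{bi-hc} is precisely the abstract Halpern-type scheme \eqref{min-halpern} applied to the family of resolvents $T_k := J_{\lambda_k}^f$, and then to invoke Theorem \ref{theo-hal}. Thus the whole argument reduces to checking the two structural hypotheses of that theorem for this particular choice of $\{T_k\}$: namely, that $\{J_{\lambda_k}^f\}$ is a strongly quasi-nonexpansive sequence and that it satisfies the demiclosedness condition \eqref{condition}.

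First I would observe that both of these facts have already been supplied. Lemma \ref{EP-lem1} shows, using pseudo-monotonicity together with the defining variational inequality \eqref{EP-resolvent} evaluated at $y=p\in S(f,K)$, that $\{J_{\lambda_k}^f\}$ is strongly quasi-nonexpansive; and Lemma \ref{EP-lem2} shows, using the Cauchy-Schwartz inequality and the $\Delta$-upper semicontinuity of $f(\cdot,y)$, that $\{J_{\lambda_k}^f\}$ satisfies \eqref{condition}. With these two ingredients in hand, Theorem \ref{theo-hal} applies verbatim and yields the strong convergence of $\{x_k\}$ to ${\rm Proj}_{\bigcap_k F(J_{\lambda_k}^f)}u$.

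It then remains only to identify the limit set. Here I would use the elementary observation recorded just after \eqref{EP-resolvent}: one always has $F(J_{\lambda}^f)\subseteq S(f,K)$, and when $f$ is pseudo-monotone the reverse inclusion $S(f,K)\subseteq F(J_{\lambda}^f)$ also holds, so that $F(J_{\lambda_k}^f)=S(f,K)$ for every $k$. Consequently $\bigcap_k F(J_{\lambda_k}^f)=S(f,K)$, and the conclusion $x_k\rightarrow {\rm Proj}_{S(f,K)}u$ follows immediately.

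Since the genuine work has been packaged into the two lemmas and into the general Theorem \ref{theo-hal}, I do not anticipate a real obstacle in this final statement; it is a clean specialization. If anything, the only point requiring care is to confirm that the standing hypotheses of the theorem---pseudo-monotonicity, $\theta$-under monotonicity, $f(x,x)=0$, lower semicontinuity and convexity of $f(x,\cdot)$, and $\Delta$-upper semicontinuity of $f(\cdot,y)$---match exactly the assumptions needed by Lemmas \ref{EP-lem1} and \ref{EP-lem2}, which they do, and that the regularization parameters satisfy $\lambda_k\in(\theta,\bar{\lambda}]$ so that each resolvent $J_{\lambda_k}^f$ is well defined by \eqref{EP-resolvent}.
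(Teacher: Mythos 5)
Your proposal is correct and matches the paper's own proof exactly: the paper likewise derives the result as an immediate consequence of Lemmas \ref{EP-lem1} and \ref{EP-lem2} combined with Theorem \ref{theo-hal}. Your additional care in spelling out the identification $\bigcap_k F(J_{\lambda_k}^f)=S(f,K)$ via pseudo-monotonicity (which the paper leaves to the remark following \eqref{EP-resolvent}) is a welcome, if minor, elaboration.
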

\begin{proof}
A consequence of Lemmas \ref{EP-lem1}, \ref{EP-lem2} and Theorem \ref{theo-hal}.
\end{proof}

\end{document}